
\documentclass[twoside,11pt]{amsart}
\usepackage{amsmath}
\usepackage{graphicx}
\usepackage{amsfonts}
\usepackage{amssymb}

\setcounter{MaxMatrixCols}{10}

\pagestyle{headings}
\theoremstyle{plain}
\newtheorem{thm}{Theorem}[section]
\newtheorem{lem}[thm]{Lemma}
\newtheorem{prop}[thm]{Proposition}

\theoremstyle{definition}

\theoremstyle{remark}
\newtheorem*{rem}{Remark}

\begin{document}
\title[Boundary value problem for the Vlasov-Poisson system]{On global
existence of classical solutions for the Vlasov-Poisson system in convex
bounded domains}
\author{Hyung Ju Hwang}
\address{Department of Mathematics, Pohang 790-784, Republic of Korea}
\email{hjhwang@postech.ac.kr}
\author{Jaewoo Jung}
\address{Department of Mathematics, Pohang 790-784, Republic of Korea}
\email{zeujung@postech.ac.kr}
\author{Juan J. L. Vel\'{a}zquez}
\address{ICMAT (CSIC-UAM-UC3M-UCM),\ Universidad Complutense, Madrid 28035,
Spain.}
\email{velazque@mat.ucm.es}
\date{}
\subjclass{}
\keywords{Vlasov-Poisson, Global existence, Boundary value problem, Convex
domain}

\begin{abstract}
We prove global existence of strong solutions for the Vlasov-Poisson system
in a convex bounded domain in the plasma physics case assuming homogeneous
Dirichlet boundary conditions for the electric potential and the specular
reflection boundary conditions for the distribution density.
\end{abstract}

\maketitle

\section{\protect\bigskip Introduction\protect\bigskip}

We consider the Vlasov-Poisson system in a smooth, convex and bounded domain
$\Omega $, with the reflection boundary condition given by
\begin{gather}
\partial _{t}f+v\cdot \nabla _{x}f+\nabla _{x}\phi \cdot \nabla
_{v}f=0,\quad \text{for $\left( t,x,v\right) \in \lbrack 0,\infty )\times
\Omega \times \mathbb{R}^{3}$,}  \label{S1E1} \\
\Delta \phi (t,x)=\rho (t,x)=\int_{\mathbb{R}^{3}}f(t,x,v)\,dv,\quad \text{%
for $(t,x)\in \lbrack 0,\infty )\times \Omega $,}  \label{S1E2} \\
f\left( 0,x,v\right) =f_{0}(x,v),~~~\text{for }\left( x,v\right) \in \Omega
\times \mathbb{R}^{3}  \label{S1E3} \\
f(t,x,v)=f(t,x,v^{\ast }),\quad \text{for $x\in \partial \Omega $,}
\label{S1E4}
\end{gather}%
where $v^{\ast }=v-2(v\cdot n_{x})n_{x}$, $n_{x}$ is the outward unit normal
vector to $\partial \Omega $ at $x\in \partial \Omega ,$ and we assume $%
\Omega $ has a $C^{5}$ boundary. Here $f\left( t,x,v\right) \geq 0$
represents the distribution density of electrons and $\phi $ is the
electrostatic potential, $f_{0}\left( x,v\right) $ is the prescribed initial
datum, and $\rho \left( t,x\right) $ is the macroscopic charge density. We
impose the Dirichlet boundary condition for the electric potential $\phi $,
\begin{equation}
\phi (t,x)=0\quad \text{if $x\in \partial \Omega $.}  \label{S1E5}
\end{equation}

In this paper, we aim to understand the role of boundaries in the dynamics
of kinetic models, in particular the Vlasov-Poisson system and to develop
tools for boundary-value problems in kinetic models.\bigskip

For the whole space without a boundary in one and two dimensions, a smooth
solution is known to exist globally in time \cite{SVI}, \cite{SUTO}. For the
three dimensional case, Batt \cite{JB}, Horst \cite{EH1}, and Bardos-Degond
\cite{CBPD} proved the global existence of classical solutions for
spherical, cylindrically symmetric, and general but small initial data
respectively. In the case of $\Omega =\mathbb{R}^{3}$ with arbitrary initial
data, the solutions of the system (\ref{S1E1})-(\ref{S1E4}) are globally
defined in time, as it was proved in \cite{KP} as well as in \cite{PLLBP}
using different methods.

However, in the presence of boundaries, the mathematical theory of
well-posedness for the solutions of the Vlasov-Poisson system becomes more
complicated compared to the case of the whole space. It was proved in \cite%
{YG1} that classical solutions for the Vlasov-Poisson system may not exist
in general without the nonnegativity assumption if $\Omega $ is the
half-space $\mathbb{R}_{+}^{3}$. On the other hand, it was also proved in
\cite{YG1} that even with the nonnegativity assumption the derivatives of
the solutions of (\ref{S1E1})-(\ref{S1E4}) cannot be uniformly bounded near
the boundary of $\Omega $ due to the fact that a Lipschitz estimate for the
characteristics in terms of the initial data is not possible.

One of the main difficulties in order to solve (\ref{S1E1})-(\ref{S1E5}),
even for short times, is to keep track of the evolution of the
characteristic curves associated to (\ref{S1E1}) which remain close during
their evolution to the so-called singular set, defined as follows
\begin{equation}
\Gamma \mathbb{=}\left\{ \left( x,v\right) \in \Omega \times \mathbb{R}%
^{3}:x\in \partial \Omega ,\ v\in T_{x}\partial \Omega \right\} ,
\label{singset}
\end{equation}%
where $T_{x}\partial \Omega \subset \mathbb{R}^{3}$ is the tangent plane to $%
\partial \Omega $ at the point $x.$

Boundary-value problems should be treated more carefully and difficulties
due to singularity formation at a boundary may be expected \cite{YG1}.
Global existence in a half-space of solutions of (\ref{S1E1})-(\ref{S1E3})
satisfying the specular reflection boundary condition (\ref{S1E4}) and
Neumann boundary condition was first proved by Guo (cf. \cite{YG2}) by
adapting a high velocity moment method in \cite{PLLBP}. The proof of the
global existence of solutions of (\ref{S1E1})-(\ref{S1E3}) satisfying (\ref%
{S1E4}) and the Dirichlet boundary condition (\ref{S1E5}) was recently
proved in \cite{HJHJJLV1}. For general convex bounded domains but with the
Neumann boundary condition for $\phi $, the global well-posedness was
recently shown in \cite{HJHJJLV2}.

\bigskip

However, a global existence theory of the Dirichlet boundary problem for the
electric potential $\phi $ has not been given yet and this paper is devoted
to proving the global existence of solutions to the Vlasov-Poisson system (%
\ref{S1E1})-(\ref{S1E3}) with the specular reflection boundary condition (%
\ref{S1E4}) for $f$ and the Dirichlet boundary condition (\ref{S1E5}) for $%
\phi $.

This paper combines the methods in the papers \cite{HJHJJLV1} and \cite%
{HJHJJLV2} to prove global existence of solutions for the Vlasov-Poisson
system in arbitrary smooth convex domains with Dirichlet boundary
conditions. The analysis in \cite{HJHJJLV2} allows to study problems with
Neumann boundary conditions. This is due to the fact that an essential
ingredient of the argument in \cite{HJHJJLV2} is the velocity Lemma first
proved in \cite{YG1} which shows that the characteristic curves associated
to (\ref{S1E1})-(\ref{S1E2}) cannot approach to the so-called singular set
if initially they are outside of it. The argument allows to generalize such
type of velocity Lemmas to Dirichlet boundary conditions, which was obtained
in \cite{HJHJJLV1}, but for the half-space case. However, simple adaptations
of such techniques in the half-space problem to the case of arbitrary convex
bounded domains do not work. The difficulty is that, contrary to the
half-space case \cite{HJHJJLV1}, we cannot have the representation formula
for $\phi $, due to the incapability of finding an explicit form of the
Green function for a general convex domain $\Omega $. As we will see, this
problem could be settled mainly by applying refined boundary estimates for
the Laplace operator, and constructing relevant supersolutions. \newline

This paper proves global existence of solutions for the Vlasov-Poisson
system for arbitrary smooth domains with Dirichlet boundary conditions. The
main new contents of this paper are some technical estimates that allow to
extend the arguments of \cite{HJHJJLV1} to arbitrary smooth convex domains.
These estimates require detailed control of the newtonian potential with
Dirichlet boundary conditions as well as some of its derivatives at points
close to the boundary of the domain. The combination of these methods with
ones in \cite{HJHJJLV2} allows to prove the stated global existence results.

The paper is organized as follows. Preliminary notations and main result of
the global existence will be presented in Section 2 and Section 3 is devoted
to the Velocity lemma and the corresponding linear problem. In Section 4, an
iterative scheme for the nonlinear problem is investigated and finally the
global bound on a key quantity for the global existence is obtained in
Section 5.

\bigskip

\section{Main Result}

First we fix a point $x\in \Omega $ and denote it as $\widetilde{x}$ to
indicate our target point. Notice that our goal is to see whether a
trajectory starting from the singular set $\{(x,v)\in \partial \Omega \times
\mathbb{R}^{3};v\cdot n_{x}=0\}$ propagates into the interior $\Omega \times
\mathbb{R}^{3}$. So, we may assume that $\widetilde{x}$ is near the boundary
$\partial \Omega $. Let $x_{0}$ be the boundary point closest to $\widetilde{%
x}$. By proper rotations and translations, we may set $x_{0}=(0,0,0)$, $%
\widetilde{x}=(\widetilde{x}_{1},0,0)$, and $\Omega \subset \mathbb{R}%
_{+}^{3}:=\{(x_{1},x_{2},x_{3})\in \mathbb{R}^{3};x_{1}>0\}$ so that the
tangent plane to $\partial \Omega $ at $x_{0}$ is just $\{x_{1}=0\}$.
\newline

Now, we use the local parametrization near $x_{0}$ to define
\begin{equation*}
x=x_{\parallel }(\mu _{1},\mu _{2})-x_{\perp }n(\mu _{1},\mu _{2}),
\end{equation*}%
so that $x_{\parallel }(\mu _{1},\mu _{2})$ is the point of $\partial \Omega
$ closest to $x$ and $n(\mu _{1},\mu _{2})$ is the outward normal to $%
\partial \Omega $ at $x_{\parallel }$. For this $x$, we represent $v$ by
\begin{equation*}
v=v_{\parallel }(\mu _{1},\mu _{2})-v_{\perp }n(\mu _{1},\mu _{2}),
\end{equation*}%
where $v_{\parallel }(\mu _{1},\mu _{2})=w_{1}u_{1}+w_{2}u_{2}\in
T_{x_{\parallel }(\mu _{1},\mu _{2})}\partial \Omega $ is the tangential
component of $v$ and $\{u_{1},u_{2}\}$ is the basis of $T_{x_{\parallel
}(\mu _{1},\mu _{2})}\partial \Omega $ given by $u_{i}:=\frac{\partial
x_{\parallel }(\mu _{1},\mu _{2})}{\partial \mu _{i}}$ for $i=1,2$. \newline

\bigskip

The system of coordinates $\left( \mu _{1},\mu _{2},x_{\perp
},w_{1},w_{2},v_{\perp }\right) $ provides a more convenient representation
for the set of points in the phase space $\Omega \times \mathbb{R}^{3}$ that
are close to the singular set $\Gamma $ defined in (\ref{singset}) as in
\cite{HJHJJLV2}. The original equation (\ref{S1E1}) takes in this new set of
coordinates the following form in Lemma below, which is in \cite{HJHJJLV2}
and we skip its proof.

\begin{lem}
\label{Lemma1}The equation (\ref{S1E1}) can be rewritten for $\left(
x,v\right) \in \left[ \partial \Omega +B_{\delta }\left( 0\right) \right]
\times \mathbb{R}^{3}$, and using the set of coordinates $\left( \mu
_{1},\mu _{2},x_{\perp },w_{1},w_{2},v_{\perp }\right) $ in the form
\begin{equation}
\frac{\partial f}{\partial t}+\sum_{i=1}^{2}\frac{w_{i}}{1+k_{i}x_{\perp }}%
\frac{\partial f}{\partial \mu _{i}}+v_{\perp }\frac{\partial f}{\partial
x_{\perp }}+\sum_{i=1}^{2}\sigma _{i}\frac{\partial f}{\partial w_{i}}+F%
\frac{\partial f}{\partial v_{\perp }}=0,  \label{S2geom}
\end{equation}%
where
\begin{equation}
\sigma _{i}\equiv E_{i}-\frac{v_{\perp }w_{i}k_{i}}{1+k_{i}x_{\perp }}%
-\sum_{j,\ell =1}^{2}\frac{\Gamma _{j,\ell }^{i}w_{j}w_{\ell }}{%
1+k_{j}x_{\perp }},\ \ F\equiv E_{\perp }+\sum_{j=1}^{2}\frac{w_{j}^{2}b_{j}%
}{1+k_{j}x_{\perp }},  \label{S2defs}
\end{equation}%
where $k_{j}$ are the principal curvatures, $b_{j}$\textbf{\ }are the
coefficients $e$ and $g$ from the second fundamental form according to the
notation in \cite{DJS} and $\Gamma _{j,\ell }^{i}$ are the Christoffel
symbols of the surface $\partial \Omega .$ The vector $E=\nabla _{x}\phi $
has been written in the form
\begin{equation}
E=E_{1}u_{1}+E_{2}u_{2}-E_{\perp }n\left( \mu _{1},\mu _{2}\right) ,
\label{Edecomp}
\end{equation}%
where $u_{1},u_{2}$ are defined above.
\end{lem}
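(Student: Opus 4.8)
The plan is to read \eqref{S2geom} as the statement that $f$ is constant along the characteristic curves of \eqref{S1E1}, and to obtain it by re-expressing the characteristic system $\dot{x}=v$, $\dot{v}=E$ in the coordinates $\left(\mu_{1},\mu_{2},x_{\perp},w_{1},w_{2},v_{\perp}\right)$. Once the time derivatives $\dot{\mu}_{i}$, $\dot{x}_{\perp}$, $\dot{w}_{i}$, $\dot{v}_{\perp}$ along a characteristic are computed, the chain rule gives $0=\frac{d}{dt}f=\partial_{t}f+\sum_{i}\dot{\mu}_{i}\partial_{\mu_{i}}f+\dot{x}_{\perp}\partial_{x_{\perp}}f+\sum_{i}\dot{w}_{i}\partial_{w_{i}}f+\dot{v}_{\perp}\partial_{v_{\perp}}f$, so the entire content is the identification of these coefficients with $\frac{w_{i}}{1+k_{i}x_{\perp}}$, $v_{\perp}$, $\sigma_{i}$, and $F$.

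First I would fix the surface geometry. Taking $(\mu_{1},\mu_{2})$ to be lines-of-curvature coordinates on $\partial\Omega$, the Weingarten (Rodrigues) equations give $\partial_{\mu_{i}}n=-k_{i}u_{i}$ (no sum) with $k_{i}$ the principal curvatures, while the Gauss equations give $\partial_{\mu_{j}}u_{i}=\sum_{\ell}\Gamma_{ij}^{\ell}u_{\ell}+b_{ij}n$, where in these coordinates $b_{ij}=b_{i}\delta_{ij}$, the $b_{i}$ being the coefficients $e,g$ of the second fundamental form in the notation of \cite{DJS}. One also has to check that the nearest-point map $x=x_{\parallel}(\mu)-x_{\perp}n(\mu)$, together with $v=\sum_{i}w_{i}u_{i}-v_{\perp}n$, is a genuine change of variables on the collar $\left[\partial\Omega+B_{\delta}(0)\right]\times\mathbb{R}^{3}$ for $\delta$ small depending on the curvature bounds, and in particular that $1+k_{i}x_{\perp}>0$ there; this is where the $C^{5}$ regularity of $\partial\Omega$ enters.

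Next I would differentiate $x=x_{\parallel}(\mu)-x_{\perp}n(\mu)$ in $t$ along a characteristic and insert $\partial_{\mu_{j}}n=-k_{j}u_{j}$ to obtain $v=\dot{x}=\sum_{i}\dot{\mu}_{i}\left(1+k_{i}x_{\perp}\right)u_{i}-\dot{x}_{\perp}n$. Matching against $v=\sum_{i}w_{i}u_{i}-v_{\perp}n$ in the basis $\{u_{1},u_{2},n\}$ yields $\dot{\mu}_{i}=\frac{w_{i}}{1+k_{i}x_{\perp}}$ and $\dot{x}_{\perp}=v_{\perp}$. Then I would differentiate $v=\sum_{i}w_{i}u_{i}-v_{\perp}n$ in $t$, substitute the Gauss--Weingarten relations and the expressions just found for $\dot{\mu}_{j}$, and match against $E=\sum_{i}E_{i}u_{i}-E_{\perp}n$ from \eqref{Edecomp}. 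The $u_{i}$-components give $\dot{w}_{i}=E_{i}-v_{\perp}k_{i}\dot{\mu}_{i}-\sum_{j,\ell}\Gamma_{j\ell}^{i}w_{j}\dot{\mu}_{\ell}$, which after substituting $\dot{\mu}_{i}$ is exactly $\sigma_{i}$ (using the symmetry of $\Gamma_{j\ell}^{i}$ in $j,\ell$ to match the denominator), while the $n$-component gives $\dot{v}_{\perp}=E_{\perp}+\sum_{j,\ell}b_{j\ell}w_{j}\dot{\mu}_{\ell}=E_{\perp}+\sum_{j}\frac{b_{j}w_{j}^{2}}{1+k_{j}x_{\perp}}=F$. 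Combined with the chain-rule identity, this is \eqref{S2geom}.

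The only real obstacle is the bookkeeping of the surface geometry: keeping sign conventions for $n$, $k_{i}$, $b_{i}$ consistent, remembering that $\{u_{1},u_{2}\}$ is generally not orthonormal, and verifying that the parametrization by $(\mu_{1},\mu_{2},x_{\perp})$ together with the fibre variables is a diffeomorphism on the collar. No analysis of the Vlasov--Poisson system itself is needed here; once the characteristic ODEs are rewritten, the conclusion is immediate, which is why the proof is attributed to \cite{HJHJJLV2} and omitted.
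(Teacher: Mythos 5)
Your proposal is correct and is essentially the computation the paper defers to in \cite{HJHJJLV2}: one pushes the characteristic ODEs $\dot{x}=v$, $\dot{v}=E$ through the Gauss--Weingarten relations in the collar coordinates, reads off $\dot{\mu}_i$, $\dot{x}_\perp$, $\dot{w}_i$, $\dot{v}_\perp$, and applies the chain rule, with lines-of-curvature coordinates on $\partial\Omega$ diagonalizing the second fundamental form so that $b_{ij}=b_i\delta_{ij}$ and the factors $1+k_ix_\perp$ appear. Your remark that $\Gamma^i_{j\ell}=\Gamma^i_{\ell j}$ is needed to rewrite the denominator as $1+k_jx_\perp$ in $\sigma_i$ is the one small bookkeeping subtlety, and you have it right.
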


\begin{rem}
Notice that since the domain $\Omega $ is convex, and due to the
nonnegativity assumption we have $F<0.$
\end{rem}

\bigskip

To prove global existence, we need to make some necessary assumptions.

1. \textbf{Compatibility conditions for the initial data.}

In order to obtain classical solutions of (\ref{S1E1})-(\ref{S1E5}) we need
to impose the following compatibility conditions on the initial data $%
f_{0}\left( x,v\right) $ at the reflection points of $\partial \Omega \times
\mathbb{R}^{3}$ (cf. \cite{YG1}, \cite{HJH}).
\begin{align}
f_{0}\left( x,v\right) & =f_{0}\left( x,v^{\ast }\right) ,  \label{S2E0b} \\
v^{\perp }\left[ \nabla _{x}^{\perp }f_{0}\left( x,v^{\ast }\right) +\nabla
_{x}^{\perp }f_{0}\left( x,v\right) \right] +2E^{\perp }\left( 0,x\right)
\nabla _{v}^{\perp }f_{0}\left( x,v\right) & =0,  \label{S2E0c}
\end{align}%
where $E^{\perp }\left( 0,x\right) $ is the decomposition of the field $%
E\left( 0,x\right) $ given by (\ref{Edecomp}) and $\nabla _{x}^{\perp
},\,\nabla _{v}^{\perp }$ are the normal components to $\partial \Omega $ of
the gradients $\nabla _{x},\;\nabla _{v}$ respectively.

\bigskip

2. \textbf{Flatness condition.}

We assume that $f_{0}$ is constant near the singular set (cf. \cite{YG2} as
well as \cite{HJH}). More precisely we will assume that $f_{0}\in C^{1,\mu }$
satisfies the following flatness condition near the singular set $\Gamma $

\begin{equation}
f_{0}\left( x,v\right) =\text{constant}\;,\;\text{dist}\left( \left(
x,v\right) ,\Gamma\right) \leq\delta_{0}  \label{S2flatness}
\end{equation}
for some $\delta>0$ small.

We need to introduce some functional spaces for technical reasons. We define
for $\mu \in \left( 0,1\right) ,$%
\begin{align*}
\left\Vert f\right\Vert _{C^{1,\mu }\left( \bar{\Omega}\times \mathbb{R}%
^{3}\right) }& =\sup_{\left( x,v\right) ,\left( x^{\prime },v^{\prime
}\right) \in \bar{\Omega}\times \mathbb{R}^{3}}\left( \frac{\left\vert
\nabla f\left( x,v\right) -\nabla f\left( x^{\prime },v^{\prime }\right)
\right\vert }{\left\vert x-x^{\prime }\right\vert ^{\mu }+\left\vert
v-v^{\prime }\right\vert ^{\mu }}\right) +\left\Vert f\right\Vert
_{L^{\infty }\left( \bar{\Omega}\times \mathbb{R}^{3}\right)
}\;\;,\;\;\nabla =\left( \nabla _{x},\nabla _{v}\right) ~, \\
C_{0}^{1,\mu }\left( \bar{\Omega}\times \mathbb{R}^{3}\right) & =\left\{
f\in C^{1,\mu }\left( \bar{\Omega}\times \mathbb{R}^{3}\right) :f\text{
compactly supported,\ }\left\Vert f\right\Vert _{C^{1,\mu }\left( \bar{\Omega%
}\times \mathbb{R}^{3}\right) }<\infty \right\} ~~,
\end{align*}%
\begin{align*}
\left\Vert f\right\Vert _{C_{\;t;\;x}^{1;1,\mu }\left( \left[ 0,T\right]
\times \bar{\Omega}\right) }& \equiv \sup_{x,x^{\prime }\in \bar{\Omega}%
,\;t,t^{\prime }\in \left[ 0,T\right] }\frac{\left\vert \nabla _{x}f\left(
t,x\right) -\nabla _{x}f\left( t^{\prime },x^{\prime }\right) \right\vert }{%
\left\vert x-x^{\prime }\right\vert ^{\mu }} \\
& +\left\Vert f\right\Vert _{C\left( \left[ 0,T\right] \times \bar{\Omega}%
\right) }+\left\Vert f_{t}\right\Vert _{C\left( \left[ 0,T\right] \times
\bar{\Omega}\right) }~\ ,
\end{align*}%
\begin{align*}
& \left\Vert f\right\Vert _{C_{t;\left( x,v\right) }^{1;1,\mu }\left( \left[
0,T\right] \times \Omega \times \mathbb{R}^{3}\right) } \\
& \equiv \sup_{x,x^{\prime }\in \bar{\Omega},\;t,t^{\prime }\in \left[ 0,T%
\right] }\frac{\left\vert \nabla _{x}f\left( t,x,v\right) -\nabla
_{x}f\left( t^{\prime },x^{\prime },v\right) \right\vert +\left\vert \nabla
_{v}f\left( t,x,v\right) -\nabla _{v}f\left( t^{\prime },x^{\prime
},v^{\prime }\right) \right\vert }{\left\vert x-x^{\prime }\right\vert ^{\mu
}+\left\vert v-v^{\prime }\right\vert ^{\mu }}+ \\
& +\left\Vert f\right\Vert _{C\left( \left[ 0,T\right] \times \bar{\Omega}%
\times \mathbb{R}^{3}\right) }+\left\Vert f_{t}\right\Vert _{C\left( \left[
0,T\right] \times \bar{\Omega}\times \mathbb{R}^{3}\right) }.
\end{align*}

We define the spaces $C\left( \left[ 0,T\right] \times \bar{\Omega}\right)
,\;C\left( \left[ 0,T\right] \times \bar{\Omega}\times \mathbb{R}^{3}\right)
$ as the spaces of continuous functions bounded in the uniform norm.

\bigskip

\bigskip

\textbf{The main result: Global existence Theorem.}

The main result of this paper is the following.

\begin{thm}
\label{globalexistence}Let $f_{0}\in C_{0}^{1,\mu }\left( \Omega \times
\mathbb{R}^{3}\right) $ for some $0<\mu <1$ with $f_{0}\geq 0$ and let $%
f_{0} $ satisfy (\ref{S2E0b})-(\ref{S2flatness}). Then there exists a unique
solution $f\in C_{t;\left( x,v\right) }^{1;1,\lambda }\left( \left( 0,\infty
\right) \times \Omega \times \mathbb{R}^{3}\right) $, $\phi \in
C_{t;x}^{1;3,\lambda }\left( \left[ 0,\infty \right) \times \Omega \right) ,$
for some $0<\lambda <\mu ,$ of the Vlasov-Poisson system (\ref{S1E1})-(\ref%
{S1E5}) with compact support in $x$ and $v.$
\end{thm}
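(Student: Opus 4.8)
The plan is to construct the solution by an iteration scheme combined with a continuation argument, following the strategy of \cite{HJHJJLV1} and \cite{HJHJJLV2}, with the new ingredient being potential-theoretic estimates for the Dirichlet problem valid in a general convex domain. First I would set up the linearized problem: given a field $E=\nabla_x\phi$ (with $\phi$ solving the Dirichlet problem for a prescribed $\rho$), one transports $f_0$ along the characteristics of \eqref{S1E1}, using the specular reflection rule \eqref{S1E4} at the boundary, and closes the loop by recomputing $\rho=\int f\,dv$. The flatness condition \eqref{S2flatness} together with the Velocity Lemma (Section 3) guarantees that characteristics starting outside a neighborhood of the singular set $\Gamma$ stay outside it, so near $\Gamma$ the solution $f$ remains equal to the constant value of $f_0$ there; this is what makes the characteristic flow well-defined and keeps $f$ from developing singularities forced by the boundary geometry, as in \cite{YG1}. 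The compatibility conditions \eqref{S2E0b}--\eqref{S2E0c} are exactly what is needed so that the transported $f$ is $C^1$ up to the reflection part of the boundary.

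Next I would run the contraction/iteration argument of Section 4 on a short time interval $[0,T]$: define $f^{(n+1)}$ from $f^{(n)}$ via $\phi^{(n)}$ and its characteristics, and show that the map is a contraction in a suitable norm on the spaces $C^{1;1,\mu}_{t;(x,v)}$ and $C^{1;3,\lambda}_{t;x}$ introduced above, with a small loss of Hölder exponent $\lambda<\mu$ absorbed once and for all. The essential estimates here are: (i) elliptic regularity for the Dirichlet problem, $\|\phi\|_{C^{3,\lambda}}\lesssim \|\rho\|_{C^{1,\lambda}}$ on $\bar\Omega$, using that $\partial\Omega\in C^5$; and (ii) control of $E$ and its first derivatives in the tubular coordinates $(\mu_1,\mu_2,x_\perp,w_1,w_2,v_\perp)$ near $\partial\Omega$ — in particular of $E_\perp$ and $\sigma_i,F$ appearing in \eqref{S2geom}--\eqref{S2defs} — which requires the ``detailed control of the newtonian potential with Dirichlet boundary conditions as well as some of its derivatives at points close to the boundary'' announced in the introduction. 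This local existence yields the solution on a maximal interval $[0,T_{\max})$.

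The final and hardest step is the global bound, carried out in Section 5: one must show that the relevant controlling quantity — essentially a bound on the velocity support of $f$, i.e. $\sup\{|v|:f(t,x,v)\neq 0\}$, together with the Hölder norms of $\phi$ — cannot blow up in finite time. Here the high-velocity-moment method of \cite{PLLBP}, adapted to the specular-reflection boundary problem as in \cite{YG2} and \cite{HJHJJLV2}, gives an a priori bound on $\rho$ in $L^\infty$ (and on the moments $\int |v|^m f\,dv$), hence on $\|\nabla_x\phi\|_\infty$, hence on the growth of the velocity support via the characteristic ODEs. The new difficulty, and where I expect the main obstacle to lie, is that in a general convex domain there is no explicit Green function, so the boundary contribution to $E$ cannot be read off a representation formula as in the half-space case \cite{HJHJJLV1}; instead one estimates it by comparison, constructing explicit supersolutions of the Dirichlet problem that dominate $\phi$ (and appropriate directional derivatives of $\phi$) in a boundary layer, exploiting convexity of $\Omega$ and the sign $F<0$ noted in the Remark. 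Once this boundary estimate is in place, the velocity bound propagates globally, the local solution extends past every finite $T$, and uniqueness follows from the same contraction estimate applied to the difference of two solutions; regularity $f\in C^{1;1,\lambda}_{t;(x,v)}$ and $\phi\in C^{1;3,\lambda}_{t;x}$ on $(0,\infty)$ is then inherited from the iteration.
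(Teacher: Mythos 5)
Your overall framework (iteration, Velocity Lemma, prolongation via a bound on the velocity support) matches the paper, but you have misplaced where the genuinely new Dirichlet-specific work goes, and this is not a cosmetic issue. In the paper the supersolution and Green-function-decomposition arguments appear in Section 3 (Lemmas \ref{timederivative} and \ref{tangentialderivative}), which establish the boundary-layer bounds
$\lvert\partial_t\phi(t,\widetilde x)\rvert,\ \lvert\partial_{x_2}\phi(t,\widetilde x)\rvert,\ \lvert\partial_{x_3}\phi(t,\widetilde x)\rvert \le C\,\widetilde x_1(1+\lvert\log\widetilde x_1\rvert)$.
These are precisely the inputs needed to differentiate the Lyapunov function $\alpha(t,x,v)=\tfrac{v_\perp^2}{2}-\phi-\bigl(\sum_i w_i^2 b_i/(1+k_ix_\perp)\bigr)x_\perp$ along a trajectory and close the Gronwall estimate in the Velocity Lemma (Lemma \ref{velocity}). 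Without an explicit Green function this logarithmic bound must be extracted by comparison with a half-space or tangent-plane reflected kernel plus maximum-principle/Poisson-kernel supersolutions and local elliptic regularity, which is the content of those two lemmas. You instead locate this difficulty in the global velocity bound of Section 5, but the paper explicitly notes that Theorem \ref{boundforQ} and its proof ``do not depend on the boundary conditions for the electric potential $\phi$'' and are taken verbatim from the Neumann case in \cite{HJHJJLV2}; the global bound also follows Pfaffelmoser's method \cite{KP}, not the Lions--Perthame moment method \cite{PLLBP} (the latter is used by Guo in the half-space \cite{YG2}, a different paper).

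A second, smaller inaccuracy: the iteration in Section 4 is not set up as a Banach contraction with a once-and-for-all exponent loss. It is a sequential approximation scheme $f^n\mapsto f^{n+1}$ whose convergence is obtained in the weaker space $C^{\nu;1,\lambda}_{t;(x,v)}$ with $\nu<1$ (Proposition \ref{Propconv}), together with the separate facts that each $Q^n(t)$ is finite on $[0,T]$, that $Q^n\to Q$ uniformly (Proposition \ref{Qconv}, which again uses the Velocity Lemma to keep characteristics away from $\Gamma$), and that the uniform bound on $Q^n$ can be prolonged by a fixed $\varepsilon_0$ (Proposition \ref{Prolog}). Your sketch would benefit from naming the Velocity Lemma estimates as the locus of the Dirichlet-specific work and from separating the convergence argument from the global-in-time a priori bound, which in this paper are handled by different mechanisms.
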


\section{Velocity Lemma and Linear Problem}

Next, we introduce the evolution of characteristic curves associated to the
Vlasov-Poisson system with the specular reflection at the boundary. \newline

Let $E(t,x):=\nabla _{x}\phi (t,x)$ be given. We define $%
(X(s;t,x,v),V(s;t,x,v))\in \overline{\Omega }\times \mathbb{R}^{3}$ such
that for each $(x,v)\in \Omega \times \mathbb{R}^{3}$,
\begin{gather}
\frac{dX}{ds}(s;t,x,v)=V(s;t,x,v),  \label{S3E1} \\
\frac{dV}{ds}(s;t,x,v)=E(s,X(s;t,x,v))=\nabla _{x}\phi (s,X(s;t,x,v)),
\label{S3E2} \\
(X(t),V(t))=(x,v),  \label{S3E3}
\end{gather}%
as long as $X\in \Omega $. The reflection boundary condition says that if $%
X(s_{1};t,x,v)\in \partial \Omega $ for some $s_{1}\in \lbrack 0,T]$, then
\begin{equation}
V(s_{1}^{+};t,x,v)=\lim_{\substack{ s\rightarrow s_{1}  \\ s>s_{1}}}%
V(s;t,x,v)=(V(s_{1}^{-};t,x,v))^{\ast }=\biggl(\lim_{\substack{ s\rightarrow
s_{1}  \\ s<s_{1}}}V(s;t,x,v)\biggl)^{\ast }.  \label{S3E4}
\end{equation}%
Here, $V^{\ast }=V-2(V\cdot n_{X})n_{X}$ where $n_{X}$ is the outward unit
normal vector to $\partial \Omega $ at $X$. \newline

Before giving an explicit formulation, we consider some underlying
motivations. If we rephrase the Velocity Lemma, it is equivalent to saying
that a trajectory starting near the singular set $\{x_{\perp }=v_{\perp
}=0\} $ remains near it in the future. More precisely, by using the local
coordinates, we represent the normal component of the characteristic
equations from Lemma \ref{Lemma1} by
\begin{equation*}
\frac{dx_{\perp }}{dt}=v_{\perp },\qquad \frac{dv_{\perp }}{dt}=E_{\perp
}(t,x)+\sum_{i=1}^{2}\frac{w_{i}^{2}b_{i}}{1+k_{i}x_{\perp }},
\end{equation*}%
where $E_{\perp }(t,x)$ is the normal component of $E(t,x)$, $k_{i}$'s are
the principal curvatures, and $b_{i}$'s are the coefficients $e$ and $g$
from the second fundamental form, according to the notations in \cite{DJS}.
Notice that a trajectory cannot escape from the singular set, provided $\dot{%
v_{\perp }}<0$ near the boundary. Roughly, this is true because $E_{\perp
}<0 $ due to Hopf Lemma and $b_{i}\leq 0$ by the convexity of $\Omega $.
Finally, we define a Lyapunov function
\begin{equation*}
\alpha (t,x,v):=\frac{v_{\perp }^{2}}{2}-\phi (t,x)-\sum_{i=1}^{2}\frac{%
w_{i}^{2}b_{i}}{1+k_{i}x_{\perp }}x_{\perp }
\end{equation*}%
and confirm the stability by differentiating it along the trajectory. It may
be possible to choose another function which is equivalent to $x_{\perp
}+v_{\perp }^{2}$, but the functional $\alpha $ makes computations simpler
because of the cancellations. \newline

Now, we begin to show the following Lemmas which will be the crucial
estimations required to derive our main result. Recall that $\Omega $ and $%
\widetilde{x}$ are given in Section 2.

\begin{lem}
\label{timederivative} Let $T>0$. Suppose that $\phi (t,x)$ solves the
following boundary value problem
\begin{gather*}
\Delta \phi (t,x)=\rho (t,x)\quad \text{for $(t,x)\in \lbrack 0,T]\times
\Omega $,} \\
\phi (t,x)=0\quad \text{if $x\in \Omega $,}
\end{gather*}%
where $\rho \in C^{1}([0,T]\times \Omega )$ is given by
\begin{equation}
\partial _{t}\rho +\nabla \cdot j=0\quad \text{in $[0,T]\times \Omega $,}
\label{localmassconservation}
\end{equation}%
for some $j\in (C^{1}([0,T]\times \Omega ))^{3}$. Then we have
\begin{equation*}
\biggl\lvert\frac{\partial \phi }{\partial t}(t,\widetilde{x})\biggl\lvert%
\leq C\widetilde{x}_{1}(1+|\log \widetilde{x}_{1}|),
\end{equation*}%
where $C>0$ depends only on $L:=\mathrm{diam}\Omega $ and $|\!|j|\!|_{\infty
}$.
\end{lem}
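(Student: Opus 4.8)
The plan is to estimate $\partial_t\phi(t,\widetilde{x})$ by differentiating the Poisson equation in time and using the local mass conservation law (\ref{localmassconservation}) to trade the time derivative of $\rho$ for a spatial divergence of $j$, which is the only quantity we control. Set $\psi(t,x):=\partial_t\phi(t,x)$. Formally $\psi$ solves $\Delta\psi=\partial_t\rho=-\nabla\cdot j$ in $\Omega$ with $\psi=0$ on $\partial\Omega$, since the Dirichlet data of $\phi$ are time-independent. Thus $\psi$ is the Newtonian potential (with Dirichlet boundary conditions) of a right-hand side that is itself a divergence of a bounded vector field. Writing $\psi(t,x)=\int_\Omega G(x,y)\,(-\nabla_y\cdot j)(t,y)\,dy$ with $G$ the Green's function of $\Omega$, and integrating by parts, $\psi(t,x)=\int_\Omega \nabla_y G(x,y)\cdot j(t,y)\,dy$ (the boundary term vanishes because $G(x,\cdot)=0$ on $\partial\Omega$). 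Hence $|\psi(t,\widetilde{x})|\le \|j\|_\infty\int_\Omega|\nabla_y G(\widetilde{x},y)|\,dy$, and the whole problem reduces to the purely elliptic estimate
\begin{equation*}
\int_\Omega|\nabla_y G(\widetilde{x},y)|\,dy\le C\,\widetilde{x}_1\bigl(1+|\log\widetilde{x}_1|\bigr),
\end{equation*}
where we recall $\widetilde{x}_1=\dist(\widetilde{x},\partial\Omega)$.

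To prove this elliptic bound I would split $\Omega$ into near and far regions relative to $\widetilde{x}$. Write $d=\widetilde{x}_1$. Using $G(x,y)=-\frac{1}{4\pi|x-y|}+h(x,y)$ with $h$ the regular (harmonic-in-$y$) corrector, the singular part contributes $\int_\Omega \frac{dy}{4\pi|\widetilde{x}-y|^2}$, which is $O(L)$ and needs no smallness — that is far better than the claimed bound, so the content is entirely in the corrector $h$ and, more importantly, in the cancellation that forces the estimate to vanish like $d$ as $d\to0$. The key structural fact is that $G(\widetilde{x},\cdot)\equiv 0$ when $\widetilde{x}\in\partial\Omega$, so $\nabla_y G(\widetilde{x},y)$ should be thought of as roughly $d$ times a $y$-derivative of $\partial_{x_1}G$ near the boundary; quantitatively, the standard Green's-function gradient bounds for $C^{1,1}$ (here $C^5$) domains give
\begin{equation*}
|\nabla_y G(x,y)|\le C\,\frac{\dist(x,\partial\Omega)}{|x-y|}\Bigl(\frac{1}{|x-y|^2}+\frac{1}{\dist(y,\partial\Omega)|x-y|}\Bigr)\ \wedge\ \frac{C}{|x-y|^2},
\end{equation*}
and taking the minimum of the two expressions and integrating over $y\in\Omega$ produces exactly the factor $d(1+|\log d|)$: the region $|\widetilde{x}-y|\lesssim d$ contributes $O(d)$, the region $|\widetilde{x}-y|\gtrsim d$ with $\dist(y,\partial\Omega)\gtrsim|\widetilde{x}-y|$ contributes $O(d\log(L/d))$ via $\int_d^L \frac{d}{r^2}\,dr$-type integrals in the two tangential variables, and the thin boundary layer $\dist(y,\partial\Omega)\lesssim|\widetilde{x}-y|$ contributes another $O(d)$. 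I would localize the boundary-gradient estimate for $G$ to a neighborhood of $\partial\Omega$ (where $C^5$ regularity lets one use a local reflection / barrier construction, or cite the classical estimates of Gr\"uter–Widman type), and on the bulk of $\Omega$, away from $\partial\Omega$, use that $G(\widetilde x,\cdot)$ is bounded with bounded gradient uniformly, contributing only $O(1)\cdot|\Omega|$, again dominated by $d(1+|\log d|)$ only if — and here is the subtlety — we again exploit that $G(\widetilde x,y)=G(\widetilde x,y)-G(x_0,y)$ for the closest boundary point $x_0$, so by the mean value theorem in the first slot the interior contribution is also $O(d)$.

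The main obstacle, and the step I expect to require the most care, is precisely obtaining the boundary-layer gradient estimate for $G$ with the correct linear-in-$d$ vanishing, since we have no explicit Green's function for a general convex domain — this is exactly the difficulty flagged in the introduction. Concretely I would: (i) fix the closest boundary point $x_0$ to $\widetilde x$, flatten $\partial\Omega$ near $x_0$ by a $C^5$ change of variables, and compare $G$ with the half-space Green's function $G_0$ plus the explicit corrector, controlling the difference $G-G_0$ and its $y$-gradient by Schauder/potential estimates using the $C^5$ (in fact $C^{1,1}$ suffices) regularity of the flattening map; (ii) on the flattened model, use the explicit formula for $G_0$ to read off $|\nabla_y G_0(\widetilde x,y)|\le C\min\{|\widetilde x-y|^{-2},\ d\,|\widetilde x-y|^{-3}\}$ and integrate; (iii) handle the transition between the boundary chart and the interior by a partition of unity, absorbing the commutator terms into lower-order contributions. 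A cleaner packaging of the same idea — which I would prefer if the constants cooperate — is to avoid pointwise gradient bounds for $G$ altogether and instead test: for any $x$, $\int_\Omega|\nabla_y G(x,y)|\,dy=\sup\{\int_\Omega \nabla_y G(x,y)\cdot g(y)\,dy:\ \|g\|_\infty\le1\}=\sup\{\,\phi_g(x):\ -\Delta\phi_g=\nabla\cdot g,\ \phi_g|_{\partial\Omega}=0\}$, and then bound $\phi_g$ by constructing an explicit radial-in-the-distance supersolution of the form $w(x)=C\,\dist(x,\partial\Omega)\,(1+|\log\dist(x,\partial\Omega)|)$, checking $\Delta w\le -C'\le \nabla\cdot g$ and $w\ge0=\phi_g$ on $\partial\Omega$, so the maximum principle gives $\phi_g(\widetilde x)\le w(\widetilde x)=C\,\widetilde x_1(1+|\log\widetilde x_1|)$. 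Verifying that $w$ is a genuine supersolution reduces to a one-variable computation using $|\nabla \dist|=1$ and $|\Delta\dist|\le C$ near $\partial\Omega$ (valid in $C^2$, hence $C^5$, domains), and this is the construction of relevant supersolutions advertised in the introduction; the only delicate point is that $\Delta(\dist\log\dist)$ has a $\dist^{-1}$ singularity of a favorable sign, which is what makes $w$ work while a plain multiple of $\dist$ would not.
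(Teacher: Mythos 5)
Your reduction to the elliptic estimate $\int_\Omega|\nabla_y G(\widetilde{x},y)|\,dy\le C\,\widetilde{x}_1(1+|\log\widetilde{x}_1|)$ --- differentiating $\Delta\phi=\rho$ in $t$, using $\partial_t\rho=-\nabla\cdot j$, and integrating by parts against $G$ --- is exactly the paper's first step. Your ``concretely I would'' plan (flatten near the closest boundary point $x_0$, compare $G$ with the half-space Green's function, read off $|\nabla_y G_0(\widetilde{x},y)|\le C\min\{|\widetilde{x}-y|^{-2},\,d\,|\widetilde{x}-y|^{-3}\}$, control the corrector by the maximum principle and Poisson-kernel type boundary bounds) is in substance what the paper does; the paper adds a preliminary rescaling $X=x/R$, $Y=y/R$ with $R=|\widetilde{x}|$ so that all the estimates are done at unit distance from the boundary, which is a normalization rather than a new idea. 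Your first displayed pointwise bound for $|\nabla_y G|$ contains a spurious term $\propto\dist(y,\partial\Omega)^{-1}$: $\nabla_y G(\widetilde{x},\cdot)$ stays bounded as $y$ approaches $\partial\Omega$ away from $\widetilde{x}$ (it limits to the Poisson kernel), and only the two factors you later write in item (ii) are needed; their minimum, integrated, already yields the $d(1+|\log d|)$ profile.

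The supersolution packaging you offer as an alternative (your ``cleaner'' route) has a genuine gap. You want to conclude $\phi_g\le w$ from a pointwise comparison $\Delta w\le -C'\le\nabla\cdot g$, but $\nabla\cdot j$ (equivalently $\nabla\cdot g$ after normalization) is not bounded below --- the hypothesis only controls $\|j\|_\infty$, not $\|\nabla\cdot j\|_\infty$ --- so no constant $C'$ makes the middle inequality true, and the maximum principle cannot be applied in this form. Put differently, the whole point of rewriting $\partial_t\rho$ as $-\nabla\cdot j$ is precisely to avoid needing a bound on $\partial_t\rho$; a barrier argument that compares Laplacians pointwise reintroduces that need. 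A correct barrier-style argument for a divergence-form right-hand side would have to be cast weakly (compare $\nabla w$ against the bounded vector field $g$ under the integral sign, not $\Delta w$ against $\nabla\cdot g$ pointwise), and the supersolution $w=C\,\dist(1+|\log\dist|)$, whose gradient is $\sim\log\dist$ and unbounded, is not the right object for that weak comparison. So keep the reduction and follow your route (i)--(iii); drop the supersolution shortcut as stated.
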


\begin{proof}
Let $R=|\widetilde{x}|\ll 1$. We change the variables $x$ and $y$ by $X=%
\frac{x}{R}$ and $Y=\frac{y}{R}$. Also, $\widetilde{X}=\frac{\widetilde{x}}{R%
}=(1,0,0)$. Let $G$ be the Green function for the given domain $\Omega $.
Then, by the representation formula and \eqref{localmassconservation}, it is
sufficient to show that
\begin{equation}
\int_{\frac{\Omega }{R}}|\nabla _{Y}G(\widetilde{X},Y)|\,dY\leq \frac{C}{R}%
(1+|\log R|).  \label{claim1}
\end{equation}

We split the region of integration into several parts.

\textbf{Case 1.} If $|Y|\leq 4$, then we decompose $G(X,Y)=\overline{G}%
(X,Y)+W(X,Y)$ where
\begin{equation*}
\overline{G}(X,Y)=-\frac{1}{4\pi R}\biggl(\frac{1}{|X-Y|}-\frac{1}{|X^{\ast
}-Y|}\biggl)
\end{equation*}%
is the Green function for the Half-space restricted to $\frac{\Omega }{R}%
\times \frac{\Omega }{R}$. Here, $X^{\ast }$ represents the reflection of $X$
with respect to the plane $\{X_{1}=0\}$. Clearly, we have
\begin{equation*}
\int_{|Y|\leq 4}|\nabla _{Y}\overline{G}(X,Y)|\,dY\leq \frac{C}{R}.
\end{equation*}%
On the other hand, let $\Psi =\nabla _{Y}W$. Then, $\Psi $ satisfies
\begin{gather*}
\Delta _{X}\Psi (X,Y)=0\quad \text{for $X,Y\in \textstyle\frac{\Omega }{R}$,}
\\
\Psi (X,Y)=-\frac{1}{4\pi R}\nabla _{Y}\biggl(\frac{1}{|X-Y|}-\frac{1}{%
|X^{\ast }-Y|}\biggl)\quad \text{for $X\in \textstyle\frac{\partial \Omega }{%
R}$}.
\end{gather*}%
Firstly, if $\mathrm{dist}(Y,\frac{\partial \Omega }{R})\geq 1$, then for
all $X\in \frac{\partial \Omega }{R}$, we have
\begin{equation*}
|\Psi (X,Y)|\leq \frac{C|X-X^{\ast }|}{R|X-Y|^{3}}\leq \frac{C|X|^{2}}{%
|X|^{3}+1}\leq C,
\end{equation*}%
by Taylor Theorem and the quadratic approximations of $\frac{\partial \Omega
}{R}$. Notice that the constant $C>0$ can be chosen uniformly with respect
to $Y$. Thus,
\begin{equation*}
|\Psi (X,Y)|\leq C\quad \text{for all $X\in \textstyle\frac{\Omega }{R}$},
\end{equation*}%
by the maximum principle.

Secondly, if $\mathrm{dist}(Y,\frac{\partial \Omega }{R})\leq 1$, let $%
Y_{0}\in \frac{\partial \Omega }{R}$ be the boundary point closest to $Y$,
i.e., $\mathrm{dist}(Y,Y_{0})=\mathrm{dist}(Y,\frac{\partial \Omega }{R})$.
Then for $X\in \frac{\partial \Omega }{R}$, we get
\begin{equation*}
|\Psi (X,Y)|\leq \frac{C}{R|X-Y|^{2}}\quad \text{if $|X-Y_{0}|\geq 1$ or $%
\leq CR$,}
\end{equation*}%
by the triangle inequality and the convexity of $\Omega $. And, we have
\begin{equation*}
|\Psi (X,Y)|\leq \frac{C}{|X-Y_{0}|^{3}}\quad \text{if $CR\leq |X-Y_{0}|\leq
1$,}
\end{equation*}%
by Taylor Theorem. Putting these together, we let
\begin{equation*}
\widetilde{\Psi }(X,Y)=%
\begin{cases}
\displaystyle\frac{C}{R} & \text{if $|X-Y_{0}|\geq 1$,} \\
\displaystyle\frac{C}{R|X-Y|^{2}} & \text{if $|X-Y_{0}|\leq CR$,} \\
\displaystyle\frac{C}{|X-Y_{0}|^{3}} & \text{if $CR\leq |X-Y_{0}|\leq 1$.}%
\end{cases}%
\end{equation*}%
Now, for $|Y|\leq 4$ and $\mathrm{dist}(Y,\frac{\partial \Omega }{R})\leq 1$%
, by the maximum principle, we construct a supersolution via the Poisson
integral formula,
\begin{equation*}
\begin{split}
|\Psi (\widetilde{X},Y)|& \leq \int_{\frac{\partial \Omega }{R}}\frac{1}{%
(1+|\xi |^{2})^{\frac{3}{2}}}\widetilde{\Psi }(\xi ,Y)\,d^{2}\xi \\
& =\int_{|\xi -Y_{0}|\leq CR}+\int_{CR\leq |\xi -Y_{0}|\leq 1}+\int_{|\xi
-Y_{0}|\geq 1}.
\end{split}%
\end{equation*}%
Let $\eta :=Y-Y_{0}$. Then $|X-Y|=|(X-Y_{0})-\eta |$. Using the inequality
\begin{equation*}
\begin{split}
\int_{|\xi -Y_{0}|\leq CR}\frac{1}{(1+|\xi |^{2})^{\frac{3}{2}}}\frac{C}{%
R|(\xi -Y_{0})-\eta |^{2}}\,d^{2}\xi & \leq \frac{C}{R}\int_{|\xi |\leq CR}%
\frac{1}{|\xi |^{2}+|\eta |^{2}}\,d^{2}\xi \\
& \leq \frac{C}{R}\biggl\lvert\log \frac{CR}{\mathrm{dist}(Y,\frac{\partial
\Omega }{R})}\biggl\lvert,
\end{split}%
\end{equation*}%
we obtain
\begin{equation*}
|\Psi (\widetilde{X},Y)|\leq \frac{C}{R}\biggl\lvert\log \frac{CR}{\mathrm{%
dist}(Y,\frac{\partial \Omega }{R})}\biggl\lvert+\frac{C}{R}+C,
\end{equation*}%
and
\begin{equation*}
\int_{\substack{ \mathrm{dist}(Y,\frac{\partial \Omega }{R})\leq 1  \\ %
|Y|\leq 4}}|\Psi (\widetilde{X},Y)|\,dY\leq \frac{C}{R}(1+|\log R|).
\end{equation*}

\textbf{Case 2.} For $|Y|\geq 4$, we fix a point $Y=Y_{0}$. Rescale the
variables by $\eta =\frac{Y}{|Y_{0}|}$ and $\xi =\frac{X}{|Y_{0}|}$ so that $%
\xi $, $\eta \in \frac{\overline{\Omega }}{|Y_{0}|R}$. Define $g(\xi ,\eta
):=G(|Y_{0}|\xi ,|Y_{0}|\eta )=G(X,Y)$. Since $\Delta _{X}G(X,Y)=\delta
(X-Y) $, we have
\begin{equation*}
\Delta _{\xi }g(\xi ,\eta )=\frac{1}{|Y_{0}|}\delta (\xi -\eta ),
\end{equation*}%
by a change of variables. Let $\varphi (\xi ,\eta ):=|Y_{0}|\nabla _{\eta
}g(\xi ,\eta )$. Then we have
\begin{gather*}
\Delta _{\xi }\varphi (\xi ,\eta )=\nabla _{\eta }\delta (\xi -\eta )\quad
\text{for all $\xi $, $\eta \in \textstyle\frac{\Omega }{|Y_{0}|R}$}, \\
\varphi (\xi ,\eta )=0\quad \text{if $\xi \in \textstyle\frac{\partial
\Omega }{|Y_{0}|R}$}.
\end{gather*}%
Now, we again divide into two cases.

\begin{itemize}
\item[$\bullet $] If $\mathrm{dist}(\eta ,\frac{\partial \Omega }{|Y_{0}|R}%
)\geq \frac{1}{10}$, we define $\psi (\xi ,\eta )$ satisfying
\begin{equation*}
\varphi (\xi ,\eta )=-\frac{1}{4\pi }\nabla _{\eta }\frac{1}{|\xi -\eta |}%
+\psi (\xi ,\eta ).
\end{equation*}%
Since $\Delta _{\xi }\varphi =\nabla _{\eta }\delta (\xi -\eta )$, we have
\begin{gather*}
\Delta _{\xi }\psi (\xi ,\eta )=0\quad \text{for all $\xi $, $\eta \in %
\textstyle\frac{\Omega }{|Y_{0}|R}$}, \\
\psi (\xi ,\eta )=\frac{1}{4\pi }\nabla _{\eta }\frac{1}{|\xi -\eta |}\quad
\text{if $\xi \in \textstyle\frac{\partial \Omega }{|Y_{0}|R}$}.
\end{gather*}%
By assumption, $|\psi (\xi ,\eta )|\leq C$ for all $\xi \in \frac{\partial
\Omega }{|Y_{0}|R}$. By the maximum principle, we have
\begin{equation*}
|\psi (\xi ,\eta )|\leq C\quad \text{for all $\xi \in \textstyle\frac{\Omega
}{|Y_{0}|R}$}.
\end{equation*}%
If we apply the boundary regularity theory to the restricted region $\{\xi
\in \frac{\Omega }{|Y_{0}|R};|\xi |\leq \frac{1}{2}\}$ with $|\eta |=1$
fixed, then we have
\begin{equation*}
|\nabla _{\xi }\psi (\xi ,\eta )|\leq C\quad \text{for all $\xi \in %
\textstyle\frac{\Omega }{|Y_{0}|R}$}.
\end{equation*}%
Hence, we obtain
\begin{equation*}
|\varphi (\widetilde{\xi },\eta )|\leq C\mathrm{dist}(\widetilde{\xi },\frac{%
\partial \Omega }{|Y_{0}|R})=\frac{C}{|Y_{0}|},
\end{equation*}%
and
\begin{equation*}
|\nabla _{Y}G(\widetilde{X},Y_{0})|\leq \frac{C}{|Y_{0}|^{3}}.
\end{equation*}

\item[$\bullet $] If $\mathrm{dist}(\eta ,\frac{\partial \Omega }{|Y_{0}|R}%
)\leq \frac{1}{10}$, we fix $\eta $, and say $\eta _{0}$. Let
\begin{equation*}
\xi ^{\star }=\xi +2\mathrm{dist}(\xi ,\{\eta \in \mathbb{R}^{3};(\eta -%
\overline{\eta }_{0})\cdot \nu (\overline{\eta }_{0})=0\})\nu (\overline{%
\eta }_{0}),
\end{equation*}%
where $\overline{\eta }_{0}$ is the boundary point closest to $\eta _{0}$,
and $\nu (\overline{\eta }_{0})$ is the outward normal vector at $\overline{%
\eta }_{0}$. This means that $\xi ^{\star }$ is the reflection of $\xi $
with respect to the tangent plane at $\overline{\eta }_{0}$. We define $%
w(\xi ,\eta )$ such that it satisfies
\begin{equation*}
\nabla _{\xi }g(\xi ,\eta )=-\frac{1}{4\pi |Y_{0}|}\nabla _{\eta }\biggl(%
\frac{1}{|\xi -\eta |}-\frac{1}{|\xi ^{\star }-\eta |}\biggl)+w(\xi ,\eta ).
\end{equation*}%
Then we have
\begin{gather*}
\Delta _{\xi }w(\xi ,\eta )=0\quad \text{for all $\xi $, $\eta \in \textstyle%
\frac{\Omega }{|Y_{0}|R}$}, \\
w(\xi ,\eta )=\frac{1}{4\pi |Y_{0}|}\biggl(\frac{\xi -\eta }{|\xi -\eta |^{3}%
}-\frac{\xi ^{\star }-\eta }{|\xi ^{\star }-\eta |^{3}}\biggl)\quad \text{%
for $\xi \in \textstyle\frac{\partial \Omega }{|Y_{0}|R}$}.
\end{gather*}%
Now, for $\xi \in \frac{\partial \Omega }{|Y_{0}|R}$, we have
\begin{equation*}
|w(\xi ,\eta _{0})|\leq \frac{C}{|Y_{0}|}\quad \text{if $|\xi -\overline{%
\eta }_{0}|\geq \frac{1}{8}$},
\end{equation*}%
by the triangle inequality, and we have
\begin{equation*}
\begin{split}
|w(\xi ,\eta _{0})|& \leq \frac{CR|\xi -\overline{\eta }_{0}|^{2}}{|\xi
-\eta _{0}|^{3}} \\
& \leq \frac{CR|\xi -\overline{\eta }_{0}|}{|\xi -\overline{\eta }%
_{0}|^{3}+|\eta _{0}-\overline{\eta }_{0}|^{3}}\quad \text{if $|\xi -%
\overline{\eta }_{0}|\leq \frac{1}{8}$},
\end{split}%
\end{equation*}%
by Taylor Theorem. Let $D:=\mathrm{dist}(\eta _{0},\frac{\partial \Omega }{%
|Y_{0}|R})$. We again use the Poisson kernel estimation to get, for $\xi \in
B_{\frac{1}{|Y_{0}|}}(0)\cap \frac{\Omega }{|Y_{0}|R}$,
\begin{equation*}
\begin{split}
|w(\xi ,\eta _{0})|& \leq \int_{|z^{\prime }|\geq \frac{1}{8}}\frac{\xi _{1}-%
\overline{\eta }_{0,1}}{\{(\xi _{1}-\overline{\eta }_{0,1})^{2}+|(\xi
^{\prime }-\overline{\eta }_{0}^{\prime })-z^{\prime }|^{2}\}^{\frac{3}{2}}}%
\frac{C}{|Y_{0}|}\,d^{2}z^{\prime } \\
& \indent+\int_{|z^{\prime }|\leq \frac{1}{8}}\frac{\xi _{1}-\overline{\eta }%
_{0,1}}{\{(\xi _{1}-\overline{\eta }_{0,1})^{2}+|(\xi ^{\prime }-\overline{%
\eta }_{0}^{\prime })-z^{\prime }|^{2}\}^{\frac{3}{2}}}\frac{CR|z^{\prime }|%
}{(|z^{\prime }|^{3}+D^{3})}\,d^{2}z^{\prime } \\
& \leq \frac{C}{|Y_{0}|}+\int_{|Z^{\prime }|\leq \frac{1}{8D}}\frac{%
CR|Z^{\prime }|}{|Z^{\prime }|^{3}+1}\,d^{2}Z^{\prime } \\
& \leq C\biggl(\frac{1}{|Y_{0}|}+R|\log \mathrm{dist~}(\eta _{0},\textstyle%
\frac{\partial \Omega }{|Y_{0}|R})|\biggl).
\end{split}%
\end{equation*}%
Here, we use the prime notation to indicate the second and third components
in the Cartesian coordinate system. Similarly to the former case, we apply
the boundary regularity theory and obtain
\begin{equation*}
|\nabla _{\xi }w(\xi ,\eta _{0})|\leq C\biggl(\frac{1}{|Y_{0}|}+R|\log
\mathrm{dist~}(\eta _{0},\textstyle\frac{\partial \Omega }{|Y_{0}|R})|\biggl)%
,
\end{equation*}%
for all $\xi \in B_{\frac{1}{|Y_{0}|}}(0)\cap \frac{\Omega }{|Y_{0}|R}$.
Hence, by the change of variables, we get
\begin{equation*}
|\nabla _{Y}G(\widetilde{X},Y_{0})|\leq C\biggl(\frac{1}{|Y_{0}|^{2}}+\frac{R%
}{|Y_{0}|^{2}}|\log \mathrm{dist~}(\frac{Y_{0}}{|Y_{0}|},\textstyle\frac{%
\partial \Omega }{|Y_{0}|R})|\biggl).
\end{equation*}
\end{itemize}

In conclusion, if $|Y|\geq 4$, we have
\begin{multline*}
\int_{4\leq |Y|\leq \frac{L}{R}}|\nabla _{Y}G(\widetilde{X},Y)|\,d^{3}Y \\
\leq C\int_{4\leq |Y|\leq \frac{L}{R}}\biggl(\frac{1}{|Y|^{2}}+\frac{R}{%
|Y|^{2}}|\log \mathrm{dist~}(\textstyle\frac{Y}{|Y|},\frac{\partial \Omega }{%
|Y|R})|\biggl)\,d^{3}Y.
\end{multline*}%
For the second term, we get
\begin{multline*}
\int_{4\leq |Y|\leq \frac{L}{R}}\frac{R}{|Y|^{2}}|\log \mathrm{dist~}(\frac{Y%
}{|Y|},\frac{\partial \Omega }{|Y_{0}|R})|\,d^{3}Y \\
\begin{split}
& \leq C\sum_{n=0}^{\frac{|\log R|}{\log 2}}\int_{4\cdot 2^{n}\leq |Y|\leq
4\cdot 2^{n+1}}\frac{R}{|Y|^{2}}|\log \mathrm{dist~}(\textstyle\frac{Y}{|Y|},%
\frac{\partial \Omega }{|Y_{0}|R})|\,d^{3}Y \\
& \leq CR\sum_{n=0}^{\frac{|\log R|}{\log 2}}2^{n}\int_{4\leq |Z|\leq
8}|\log \mathrm{dist~}(\textstyle\frac{Z}{|Z|},\frac{\partial \Omega }{%
2^{n}R|Z|})|\,d^{3}Z. \\
& \leq C
\end{split}
\\
\end{multline*}%
Notice that the last integration is bounded. Thus, we have established %
\eqref{claim1} for $|Y|\geq 4$.
\end{proof}

\begin{lem}
\label{tangentialderivative} With the same assumptions of Lemma \ref%
{timederivative}, we have
\begin{equation*}
\biggl\lvert\frac{\partial \phi }{\partial x_{2}}(t,\widetilde{x})%
\biggl\lvert+\biggl\lvert\frac{\partial \phi }{\partial x_{3}}(t,\widetilde{x%
})\biggl\lvert\leq C\widetilde{x}_{1}(1+|\log \widetilde{x}_{1}|),
\end{equation*}%
where $C>0$ depends only on $L$ and $|\!|\rho |\!|_{\infty }$.
\end{lem}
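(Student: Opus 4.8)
The plan is to derive the estimate from the Green-function representation of $\phi$, following the scheme of the proof of Lemma~\ref{timederivative}, the only genuinely new point being the mechanism that produces the prefactor $\widetilde x_{1}$. Since $\rho(t,\cdot)\in C^{1}(\Omega)$, the potential $\phi(t,\cdot)$ is the solution of $\Delta\phi=\rho$ in $\Omega$ with $\phi=0$ on $\partial\Omega$, and is given by $\phi(t,x)=\int_{\Omega}G(x,y)\rho(t,y)\,dy$ with $G$ the Dirichlet Green function of $\Omega$. Differentiating with respect to a tangential variable $x_{j}$, $j=2,3$,
\begin{equation*}
\frac{\partial\phi}{\partial x_{j}}(t,\widetilde x)=\int_{\Omega}\partial_{x_{j}}G(\widetilde x,y)\,\rho(t,y)\,dy ,
\end{equation*}
where $\partial_{x_{j}}$ acts on the first slot of $G$; bounding $|\rho|$ by $\|\rho\|_{\infty}$, it suffices to prove
\begin{equation*}
\int_{\Omega}\bigl|\partial_{x_{j}}G(\widetilde x,y)\bigr|\,dy\le C\,\widetilde x_{1}\bigl(1+|\log\widetilde x_{1}|\bigr),\qquad j=2,3,
\end{equation*}
with $C=C(L)$.

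The ingredient replacing the decay of $\nabla_{y}G$ used in Lemma~\ref{timederivative} is the following vanishing property. Since $G(\cdot,y)\equiv0$ on $\partial\Omega$ and the tangent plane to $\partial\Omega$ at $x_{0}=0$ is the coordinate plane $\{x_{1}=0\}$, differentiating $G(\cdot,y)$ along $\partial\Omega$ at $x_{0}$ gives $\partial_{x_{2}}G(x_{0},y)=\partial_{x_{3}}G(x_{0},y)=0$ for all $y\in\Omega$. As $\Omega$ is convex and $\widetilde x=(\widetilde x_{1},0,0)\in\Omega$, the segment $\{(s,0,0):0\le s\le\widetilde x_{1}\}$ is contained in $\overline\Omega$, so for $j=2,3$
\begin{equation*}
\partial_{x_{j}}G(\widetilde x,y)=\int_{0}^{\widetilde x_{1}}\partial_{x_{1}}\partial_{x_{j}}G\bigl((s,0,0),y\bigr)\,ds .
\end{equation*}
I would then split the $y$-integral at scale $\widetilde x_{1}$. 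On $\{|y-\widetilde x|\le2\widetilde x_{1}\}$ the half-space comparison $G=\overline G+W$ of Lemma~\ref{timederivative} together with the elementary inequality $|\widetilde x^{\ast}-y|\ge\widetilde x_{1}$ (with $\widetilde x^{\ast}$ the reflection of $\widetilde x$ in $\{x_{1}=0\}$) gives $\int_{|y-\widetilde x|\le2\widetilde x_{1}}|\partial_{x_{j}}G(\widetilde x,y)|\,dy\le C\widetilde x_{1}$ at once. On $\{|y-\widetilde x|\ge2\widetilde x_{1}\}$ one has $|\zeta-y|\ge\tfrac12|\widetilde x-y|$ for every $\zeta$ on the segment, so the pointwise bound $|\partial^{2}_{x}G(\zeta,y)|\le C|\zeta-y|^{-3}$, valid for the convex domain $\Omega$, yields
\begin{equation*}
\int_{2\widetilde x_{1}\le|y-\widetilde x|\le L}\bigl|\partial_{x_{j}}G(\widetilde x,y)\bigr|\,dy\le C\widetilde x_{1}\int_{2\widetilde x_{1}\le|z|\le L}\frac{dz}{|z|^{3}}\le C\widetilde x_{1}\bigl(1+|\log\widetilde x_{1}|\bigr),
\end{equation*}
which is precisely the logarithmic loss in the statement. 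Summing the two contributions proves the claim.

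The quantitative Green-function estimates used above—the half-space comparison $G=\overline G+W$ with suitable control of $\partial_{x}W$ and $\partial^{2}_{x}W$, and the bound $|\partial^{2}_{x}G(\zeta,y)|\le C|\zeta-y|^{-3}$—are obtained exactly by the rescaling-and-supersolution argument developed for Lemma~\ref{timederivative}: one sets $R=\widetilde x_{1}=|\widetilde x|$, $X=x/R$, $\widetilde X=(1,0,0)$, reduces to an estimate on $\Omega/R$ of the same type as \eqref{claim1} (now for the tangential derivatives), decomposes $G=\overline G+W$, treats the regions $|Y|\le4$ and $|Y|\ge4$ separately, controls $\partial_{X}W$ on $\partial\Omega/R$ by the quadratic approximation of the boundary and then inside $\Omega/R$ by the maximum principle and a Poisson-integral supersolution, and finally sums over the dyadic shells $4\cdot2^{n}\le|Y|\le4\cdot2^{n+1}$. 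The main obstacle is therefore the one already met there: the uniform-in-$R$ control, up to the boundary of $\Omega/R$—whose boundary flattens as $R\to0$—of the harmonic correction $W=G-\overline G$ and its first and second derivatives via the supersolution construction. Conceptually the lemma is just the assertion that $\nabla_{x}\phi(t,\cdot)$ is log-Lipschitz up to $\partial\Omega$ with constant controlled by $\|\rho\|_{\infty}$, combined with the vanishing $\nabla_{\mathrm{tan}}\phi(t,x_{0})=0$ forced by $\phi|_{\partial\Omega}=0$.
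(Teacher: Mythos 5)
Your proposal is correct and uses essentially the same ideas as the paper's own proof: the vanishing $\partial_{x_{j}}G(x_{0},\cdot)=0$ for $j=2,3$ forced by $G|_{\partial\Omega}=0$ together with the flatness of the tangent plane at $x_{0}$, exploited via a Taylor/mean-value step to gain the prefactor $\widetilde{x}_{1}$; the split of the $y$-integral at scale $\sim\widetilde{x}_{1}$; the half-space comparison $G=\overline{G}+W$ with a rescaling-and-supersolution treatment of the harmonic correction $W$; and the log arising from the $|z|^{-3}$ decay in the far field. The difference is mainly organizational. You isolate the mechanism cleanly by invoking the fundamental theorem of calculus along the normal segment in the original variables and then quoting a global bound $|\nabla_{x}^{2}G(\zeta,y)|\leq C|\zeta-y|^{-3}$ uniform up to $\partial\Omega$; the paper instead rescales by $R=|\widetilde{x}|$ and then, in its Case~1, rescales a second time by $|Y|$ and applies boundary regularity to the harmonic function $G(\cdot,\eta)$ on $\Omega_{0}$, so that the second-derivative control and the vanishing at $\xi=0$ together give $|\partial_{\xi_{2}}G(\widetilde{\xi},\eta)|\leq C/(R|Y|^{2})$ — which is precisely your FTC step in disguise. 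Your packaging is conceptually cleaner, but be aware that the bound $|\nabla_{x}^{2}G(\zeta,y)|\leq C|\zeta-y|^{-3}$, which you defer to ``exactly the rescaling-and-supersolution argument of Lemma~\ref{timederivative},'' is not literally established there (that lemma concerns $\nabla_{y}G$); it is proved by the two-scale regularity argument that the paper carries out in Cases~1 and~2 of the present lemma, and by the domain's $C^{5}$ regularity rather than its convexity, as you implied. Also note your far-region contribution should read $\int_{2\widetilde{x}_{1}\leq|z|\leq L}|z|^{-3}\,dz\sim\log(L/\widetilde{x}_{1})$, giving $C\widetilde{x}_{1}(1+|\log\widetilde{x}_{1}|)$ as claimed; the near-region estimate follows as you say from the $|\widetilde{x}-y|^{-2}$ singularity of $\nabla_{x}\overline{G}$, with $W$ handled by the maximum principle and interior regularity at distance $\widetilde{x}_{1}$ from $\partial\Omega$.
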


\begin{proof}
As in the proof of Lemma \ref{timederivative}, we take the scaled variables $%
X=\frac{x}{R}$ and $Y=\frac{y}{R}$ where $R=|\widetilde{x}|$. Say $%
\widetilde{X}=\frac{\widetilde{x}}{R}=(1,0,0)$. To compute $|\frac{\partial G%
}{\partial x_{2}}|$, we divide it into two cases.

\textbf{Case 1.} If $|Y|\geq 2$, we decompose the Green function $G(X,Y)=%
\overline{G}(X,Y)+W(X,Y)$ where
\begin{equation*}
\overline{G}(X,Y)=-\frac{1}{4\pi R}\biggl(\frac{1}{|X-Y|}-\frac{1}{%
|X-Y^{\ast }|}\biggl)
\end{equation*}%
and $Y^{\ast }$ represents the reflection of $Y$ with respect to the plane $%
\{X_{1}=0\}$. Notice that if $|X|\leq \frac{3}{4}|Y|$, then we have
\begin{equation*}
|\overline{G}(X,Y)|\leq \frac{C}{R|Y|}.
\end{equation*}%
Moreover, since $0\leq W(X,Y)\leq -\overline{G}(X,Y)$, we get
\begin{equation*}
|G(X,Y)|\leq \frac{C}{R|Y|}.
\end{equation*}%
Now, we take the variables $\xi =\frac{X}{|Y|}$ and $\eta =\frac{Y}{|Y|}$
and consider the restricted region $\Omega _{0}=\{\xi \in \frac{\Omega }{R|Y|%
};|\xi |\leq \frac{3}{4}\}$ with $Y$ fixed. Since
\begin{equation*}
\Delta _{\xi }G(\xi ,\eta )=0\text{ in $\Omega _{0}$},\quad |G(\xi ,\eta
)|\leq \frac{C}{R|Y|}\text{ on $\partial \Omega _{0}$},
\end{equation*}%
applying regularity theory leads to
\begin{equation*}
\biggl\lvert\frac{\partial ^{\alpha }G}{\partial \xi ^{\alpha }}(\xi ,\eta )%
\biggl\lvert\leq \frac{C}{R|Y|}\quad \text{for any multi-index $\alpha $}.
\end{equation*}%
Let $\widetilde{\xi }=\frac{\widetilde{x}}{|Y|}$. Since $|\frac{\partial G}{%
\partial \xi _{2}}(0,\eta )|=0$, we have
\begin{equation*}
\biggl\lvert\frac{\partial G}{\partial \xi _{2}}(\widetilde{\xi },\eta )%
\biggl\lvert\leq \frac{C}{R|Y|^{2}},
\end{equation*}%
and
\begin{equation*}
\biggl\lvert\frac{\partial G}{\partial X_{2}}(\widetilde{X},Y)\biggl\lvert%
\leq \frac{C}{R|Y|^{3}}.
\end{equation*}

\textbf{Case 2.} If $|Y|\leq 2$, we denote as $Y_{0}$ the boundary point
closest to $Y$. Decompose $G(X,Y)=\overline{G}(X,Y)+W(X,Y)$ where
\begin{equation*}
\overline{G}(X,Y)=-\frac{1}{4\pi R}\biggl(\frac{1}{|X-Y|}-\frac{1}{|X-%
\overline{Y}|}\biggl).
\end{equation*}%
Here, $\overline{Y}$ is the reflection point of $Y$ with respect to the
tangent plane at $Y_{0}$. For $X\in \frac{\partial \Omega }{R}$, we express $%
W(X,Y)$ in terms of $e_{\perp }:=\frac{Y-Y_{0}}{|Y-Y_{0}|}$, $\eta :=\frac{%
X-Y_{0}}{|Y-Y_{0}|},$ and $\eta ^{\ast }:=\frac{\overline{X}-Y_{0}}{|Y-Y_{0}|%
}$ as
\begin{equation*}
W(X,Y)=\frac{1}{4\pi R|Y-Y_{0}|}\biggl(\frac{1}{|e_{\perp }-\eta |}-\frac{1}{%
|e_{\perp }-\eta ^{\ast }|}\biggl).
\end{equation*}%
Then we have
\begin{equation*}
\biggl\lvert\frac{1}{|e_{\perp }-\eta |}-\frac{1}{|e_{\perp }-\eta ^{\ast }|}%
\biggl\lvert\;%
\begin{cases}
\displaystyle\;\leq CR|Y-Y_{0}||\eta |^{2} & \text{if $|\eta |\leq 1$}, \\
&  \\
\displaystyle\;\leq \frac{CR|Y-Y_{0}|}{|\eta |} & \text{if $|\eta |\geq 1$}.%
\end{cases}%
\end{equation*}%
Combining these, we obtain $|W(X,Y)|\leq C$. On the other hands, for $X\in
\frac{\Omega }{R}$ on the line segment $|X|=4$, we have $|W(X,Y)|\leq \frac{C%
}{R}$. Indeed, this can be done by taking $-\overline{G}$ as a
supersolution, and then applying the maximum principle. Now, if we consider
the region $\frac{\Omega }{R}\cap \{|X|\leq 4\}$ with $Y$ fixed, then using
the regularity theory leads to
\begin{equation*}
|\nabla _{X}W(X,Y)|\leq \frac{C}{R},
\end{equation*}%
and by adding $|\nabla _{X}\overline{G}|$ term, we have
\begin{equation*}
\biggl\lvert\frac{\partial G}{\partial X_{2}}(\widetilde{X},Y)\biggl\lvert%
\leq \frac{C}{R|\widetilde{X}-Y|^{2}}.
\end{equation*}%
From all these calculations, we conclude
\begin{equation*}
\begin{split}
\int_{\frac{\Omega }{R}}\biggl\lvert\frac{\partial G}{\partial X_{2}}(%
\widetilde{X},Y)\biggl\lvert\,dY& =\int_{|Y|\geq 2}+\int_{|Y|\leq 2}%
\biggl\lvert\frac{\partial G}{\partial X_{2}}(\widetilde{X},Y)\biggl\lvert%
\,dY \\
& \leq \int_{|Y|\geq 2}\frac{C}{R|Y|^{3}}\,dY+\int_{|Y|\leq 2}\frac{C}{R|%
\widetilde{X}-Y|^{2}}\,dY \\
& \leq \frac{C}{R}(1+|\log R|).
\end{split}%
\end{equation*}
\end{proof}

We now give the main result, which plays the role of Velocity Lemma in our
setting.

\begin{lem}[Velocity Lemma]
\label{velocity}Let $(X(s;t,x,v),V(s;t,x,v))$ be the characteristic curves
associated to the Vlasov-Poisson system defined previously. Suppose $\phi
(t,x)$ satisfies the assumptions of Lemma \ref{timederivative}. Then there
exist constants $C_{1}$ and $C_{2}>0$ depending only on $\Omega $, $|\!|\rho
|\!|_{L^{\infty }}$, and $|\!|j|\!|_{L^{\infty }}$, such that if $X_{\perp }$
is small enough, we have
\begin{equation*}
C_{1}(X_{\perp }+V_{\perp }^{2})(t)\leq (X_{\perp }+V_{\perp }^{2})(s)\leq
C_{2}(X_{\perp }+V_{\perp }^{2})(t),
\end{equation*}%
for $s$, $t\in \lbrack 0,T]$.
\end{lem}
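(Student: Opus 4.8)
The plan is to run a logarithmic Gronwall argument for the Lyapunov functional
\begin{equation*}
\alpha(t,x,v)=\frac{v_{\perp}^{2}}{2}-\phi(t,x)-\sum_{i=1}^{2}\frac{w_{i}^{2}b_{i}}{1+k_{i}x_{\perp}}x_{\perp}
\end{equation*}
introduced above, and then to transfer the bound to $X_{\perp}+V_{\perp}^{2}$. The first step is to record the equivalence $\alpha\simeq x_{\perp}+v_{\perp}^{2}$ near $\partial\Omega$: since $f\geq0$ we have $\rho\geq0$, so $\phi$ is subharmonic with $\phi|_{\partial\Omega}=0$, hence $\phi\leq0$, and the Hopf lemma gives $-\phi(t,x)\geq c_{0}x_{\perp}$ near $\partial\Omega$, while $\rho\in L^{\infty}$ and elliptic regularity give $-\phi(t,x)\leq Cx_{\perp}$; convexity of $\Omega$ forces $b_{i}\leq0$, so the last term of $\alpha$ is nonnegative and bounded by $C|v_{\parallel}|^{2}x_{\perp}$. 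Hence, on the set where $x_{\perp}$ is small,
\begin{equation*}
c_{1}\bigl(x_{\perp}+v_{\perp}^{2}\bigr)\leq\alpha(t,x,v)\leq C\bigl(x_{\perp}+v_{\perp}^{2}\bigr)+C|v_{\parallel}|^{2}x_{\perp},
\end{equation*}
so $\alpha$ is comparable to $x_{\perp}+v_{\perp}^{2}$ as long as the tangential speed stays bounded along the flow, and it suffices to compare $\alpha(s)$ with $\alpha(t)$.

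Next I would differentiate $\alpha$ along the characteristics, using the ODEs read off from Lemma~\ref{Lemma1}, namely $\dot{x}_{\perp}=v_{\perp}$, $\dot{v}_{\perp}=E_{\perp}+\sum_{j}\frac{w_{j}^{2}b_{j}}{1+k_{j}x_{\perp}}$, $\dot{\mu}_{i}=\frac{w_{i}}{1+k_{i}x_{\perp}}$ and $\dot{w}_{i}=\sigma_{i}$. The contribution of $\tfrac{1}{2}v_{\perp}^{2}$ is $v_{\perp}E_{\perp}+v_{\perp}\sum_{j}\frac{w_{j}^{2}b_{j}}{1+k_{j}x_{\perp}}$; the contribution of $-\phi(s,X(s))$ is $-\partial_{t}\phi-E\cdot V=-\partial_{t}\phi-g_{ij}E_{i}w_{j}-E_{\perp}v_{\perp}$ with $g_{ij}=u_{i}\cdot u_{j}$; the contribution of $\frac{d}{ds}\bigl(-\sum_{i}\frac{w_{i}^{2}b_{i}}{1+k_{i}x_{\perp}}x_{\perp}\bigr)$ contains, besides terms carrying an explicit factor $x_{\perp}$, the term $-\sum_{i}\frac{w_{i}^{2}b_{i}v_{\perp}}{(1+k_{i}x_{\perp})^{2}}$. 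The point of this particular $\alpha$ is that the two $E_{\perp}v_{\perp}$ terms cancel exactly, and the two $v_{\perp}\sum w_{i}^{2}b_{i}/(1+k_{i}x_{\perp})$-type terms cancel up to a remainder which carries a factor $x_{\perp}$. What survives is
\begin{equation*}
\frac{d\alpha}{ds}=-\partial_{t}\phi(s,X(s))-g_{ij}E_{i}w_{j}+\bigl(\text{terms carrying an explicit factor }x_{\perp}\bigr),
\end{equation*}
where the last group involves $\sigma_{i}$ and the $s$-derivatives $\dot{b}_{i},\dot{k}_{i}$ of the geometric coefficients.

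I would then estimate the right-hand side. By Lemma~\ref{timederivative} applied with $X(s)$ as target point, $|\partial_{t}\phi(s,X(s))|\leq Cx_{\perp}(1+|\log x_{\perp}|)$, and by Lemma~\ref{tangentialderivative} the tangential components satisfy $|E_{i}(s,X(s))|\leq Cx_{\perp}(1+|\log x_{\perp}|)$; moreover $|w_{i}|$ is bounded along the flow, $|\sigma_{i}|\leq|E_{i}|+C|v_{\perp}||w|+C|w|^{2}$, and $\dot{b}_{i},\dot{k}_{i}$ are bounded because $\partial\Omega\in C^{5}$. Using $|v_{\perp}|\leq\sqrt{2\alpha}$ and $x_{\perp}\leq C\alpha$, every surviving term is of size $Cx_{\perp}(1+|\log x_{\perp}|)$ or $C\alpha$, and by the monotonicity of $r\mapsto r(1+|\log r|)$ near $r=0$ together with $x_{\perp}\leq C\alpha$,
\begin{equation*}
\Bigl|\frac{d\alpha}{ds}\Bigr|\leq C\,\alpha\,(1+|\log\alpha|),\qquad\text{i.e.}\qquad\Bigl|\frac{d}{ds}\log\alpha\Bigr|\leq C\,(1+|\log\alpha|).
\end{equation*}
Integrating the Gronwall inequality for $s\mapsto 1+|\log\alpha(s)|$ on $[0,T]$ gives $1+|\log\alpha(s)|\leq e^{C|s-t|}\bigl(1+|\log\alpha(t)|\bigr)$, hence a two-sided comparison of $\alpha(s)$ and $\alpha(t)$, which together with the first step yields the asserted bounds for $X_{\perp}+V_{\perp}^{2}$. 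At a reflection time $s_{1}$ one has $X_{\perp}(s_{1})=0$ and $V_{\perp}\mapsto-V_{\perp}$, so $\alpha$ is continuous across the bounce and the estimate chains through; a continuity argument shows that if $(X_{\perp}+V_{\perp}^{2})(t)$ is small enough the trajectory remains in the regime where the above is valid.

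The main difficulty I anticipate is the bookkeeping in the differentiation step: one must check that, after the cancellations, \emph{every} surviving term is either one of the potential-derivative terms controlled by Lemmas~\ref{timederivative}--\ref{tangentialderivative} or genuinely carries a factor $x_{\perp}$ that can be absorbed into $\alpha$, and one must justify the equivalence $\alpha\simeq x_{\perp}+v_{\perp}^{2}$ carefully (here the Hopf lemma, the gradient bound on $\phi$, the sign $b_{i}\leq0$ from convexity, and the boundedness of the tangential speed along the flow all enter). The logarithmic loss in Lemmas~\ref{timederivative}--\ref{tangentialderivative} is harmless: it only enlarges the Gronwall constant, without changing the structure of the argument.
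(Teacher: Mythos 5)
Your proof follows essentially the same route as the paper: you define the same Lyapunov functional $\alpha$, justify the equivalence $\alpha\simeq x_{\perp}+v_{\perp}^{2}$ via Hopf's lemma, convexity ($b_i\leq 0$), and elliptic regularity, exploit the same cancellations of the $E_{\perp}v_{\perp}$ and $v_{\perp}\sum w_i^2 b_i/(1+k_i x_{\perp})$ terms in $d\alpha/ds$, control the surviving non-$x_\perp$-weighted terms ($\partial_t\phi$ and the tangential components $E_i$) by Lemmas~\ref{timederivative} and~\ref{tangentialderivative}, and close with a log-Gronwall estimate. Your write-up is in fact slightly more careful than the paper's at two points — you note explicitly that the upper equivalence constant for $\alpha$ requires bounded tangential speed, and you record continuity of $\alpha$ across reflections — but the argument is the same.
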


\begin{proof}
Due to Hopf Lemma, we can choose the constant $\epsilon _{0}>0$ such that
\begin{equation}
\phi (t,x)\leq -\epsilon _{0}x_{\perp }  \label{gradientestimate}
\end{equation}%
for $x_{\perp }$ small. We define
\begin{equation*}
\alpha (t,x,v)=\frac{v_{\perp }^{2}}{2}-\phi (t,x)-\biggl(\sum_{i=1}^{2}%
\frac{w_{i}^{2}b_{i}}{1+k_{i}x_{\perp }}\biggl)x_{\perp },
\end{equation*}%
where $b_{i}$'s are the coefficients of the second fundamental form, and $%
k_{i}$'s are the principal curvatures of the surface $\partial \Omega $.
Notice that $b_{i}\leq 0$ by the convexity of $\Omega $. So, $\alpha (t,x,v)$
is equivalent to $x_{\perp }+v_{\perp }^{2}$. That is, it is sufficient to
show
\begin{equation*}
C_{1}\alpha (t,X(t),V(t))\leq \alpha (s,X(s),V(s))\leq C_{2}\alpha
(t,X(t),V(t))
\end{equation*}%
for $s,t\in \lbrack 0,T]$. By differentiating $\alpha $ with respect to $t$
along the characteristics and by representing the field $E$ as $E=\nabla
_{x}\phi =E_{1}u_{1}+E_{2}u_{2}-E_{\perp }n_{x}(\mu _{1},\mu _{2})$, we have
\begin{equation*}
\begin{split}
\frac{d\alpha }{dt}(t,x(t),v(t))& =-\frac{\partial \phi }{\partial t}%
(t,x)-\sum_{i}w_{i}E_{i}\frac{b_{i}}{k_{i}} \\
& \indent-\sum_{i}\biggl(2w_{i}b_{i}E_{i}+w_{i}^{2}\frac{db_{i}}{dt}%
-2\sum_{j,l}\frac{\Gamma _{jl}^{i}w_{i}w_{j}w_{l}b_{i}}{1+k_{j}x_{\perp }}%
\biggl)\frac{x_{\perp }}{1+k_{i}x_{\perp }} \\
& \indent+\sum_{i}\biggl(v_{\perp }k_{i}-x_{\perp }\frac{dk_{i}}{dt}\biggl)%
\frac{x_{\perp }w_{i}^{2}b_{i}}{(1+k_{i}x_{\perp })^{2}},
\end{split}%
\end{equation*}%
where $\Gamma _{jl}^{i}$;s are the Christoffel symbols. Using Lemma \ref%
{timederivative}, \ref{tangentialderivative}, and the equation %
\eqref{gradientestimate}, we obtain
\begin{equation*}
\biggl\lvert\frac{d\alpha }{dt}(t,X(t),V(t))\biggl\lvert\leq C\alpha
(1+|\log \alpha |).
\end{equation*}%
Therefore, the Lemma follows by Gronwall inequality.
\end{proof}

\bigskip

We give the theorem of well-posedness for the linear problem (\ref{S1E1}), (%
\ref{S1E3}), (\ref{S1E4}) in the following theorem.

\begin{thm}
\label{Thlinear} Assume that $E\in C_{\;t;\;x}^{0;1,\mu }\left( \left[ 0,T%
\right] \times \bar{\Omega}\right) $ for some $\mu \in \left( 0,1\right) .$
Suppose that $f_{0}\in C_{0}^{1,\mu }\left( \bar{\Omega}\times \mathbb{R}%
^{3}\right) $ for some $\mu >0$ \ and $f_{0}\geq 0$ . Then there exists a
unique solution, $f\in $ $C_{t;\left( x,v\right) }^{1;1,\lambda }\left( %
\left[ 0,T\right] \times \Omega \times \mathbb{R}^{3}\right) ,$ to the
linear Vlasov-Poisson system (\ref{S1E1}), (\ref{S1E3}), (\ref{S1E4}), for
some $0<\lambda <\mu .$ Moreover the function $f$ satisfies
\begin{align}
f& \geq 0\;  \label{S3Xesp1} \\
\int f\left( t,x,v\right) dxdv& =\int f_{0}\left( x,v\right)
dxdv\;\;,\;\;t\in \left[ 0,T\right] .  \label{S3Xesp2n}
\end{align}
\end{thm}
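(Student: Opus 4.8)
The plan is to construct the solution by the method of characteristics and to read off all the stated properties from the regularity of the characteristic flow. Since $E\in C^{0;1,\mu}_{t;x}([0,T]\times\bar\Omega)$ is, in particular, Lipschitz in $x$ uniformly in $t$, the system (\ref{S3E1})--(\ref{S3E4}) has a unique $C^{1}$ solution as long as $X$ remains in $\Omega$ (standard ODE theory), and the specular law $v\mapsto v^{*}=v-2(v\cdot n_{X})n_{X}$ is as regular as $\partial\Omega$, i.e. $C^{4}$. The first task is to show that the backward flow $\Phi(t,x,v):=(X(0;t,x,v),V(0;t,x,v))$ is globally defined, and suitably regular off the singular set, on $[0,T]\times\Omega\times\mathbb{R}^{3}$. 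This is where the Velocity Lemma (Lemma \ref{velocity}) enters: its two-sided bound $C_{1}\alpha(t)\le\alpha(s)\le C_{2}\alpha(t)$, with $\alpha\sim x_{\perp}+v_{\perp}^{2}$, has two consequences. First, a trajectory that comes within a sufficiently small fixed distance of $\Gamma$ at some time stays within a fixed distance of $\Gamma$ on all of $[0,T]$; hence there is a radius $\delta_{1}>0$ such that the flow carries the $\delta_{1}$-neighborhood of $\Gamma$ into the neighborhood on which $f_{0}$ is constant by the flatness condition \eqref{S2flatness}, for all $s,t\in[0,T]$. Second, on the complementary region, where $x_{\perp}+v_{\perp}^{2}$ is bounded below, every boundary hit has normal speed $|v_{\perp}|$ bounded below (recall $\phi=0$ on $\partial\Omega$, so $\alpha=v_{\perp}^{2}/2$ there); thus reflections there are transversal, only finitely many occur on $[0,T]$ with a count controlled by that lower bound, $\|E\|_{\infty}$ and $T$, the hitting times depend smoothly on $(x,v)$ by the implicit function theorem, and the compatibility conditions \eqref{S2E0b}--\eqref{S2E0c} make the flow pieces join $C^{1}$-smoothly across each reflection.

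Granting the flow, set $f(t,x,v):=f_{0}(\Phi(t,x,v))$. Then $f\ge0$ follows from $f_{0}\ge0$, giving \eqref{S3Xesp1}. By construction $\frac{d}{ds}f(s,X(s;t,x,v),V(s;t,x,v))=0$, so $f$ solves (\ref{S1E1}) together with (\ref{S1E3})--(\ref{S1E4}); conversely, any $C^{1}$ solution of that problem is constant along characteristics by the chain rule, hence equals $f_{0}\circ\Phi$, which yields uniqueness. For the mass identity \eqref{S3Xesp2n} one notes that the interior characteristic field $(v,E(t,x))$ is divergence-free in $(x,v)$ because $E$ does not depend on $v$, so the interior flow preserves Lebesgue measure; the specular map $v\mapsto v^{*}$ is a Euclidean reflection and so has $|\det|=1$; and reflections occur on a time set of measure zero. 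Hence $\Phi(t,\cdot,\cdot)$ is measure-preserving and $\int f(t,x,v)\,dx\,dv=\int f_{0}(x,v)\,dx\,dv$. Compact support in $v$ propagates because $|V(s;t,x,v)|\le|v|+T\|E\|_{\infty}$, and in $x$ it is automatic from the boundedness of $\Omega$.

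The remaining and most delicate point is the $C^{1;1,\lambda}_{t;(x,v)}$ regularity. On the $\delta_{1}$-neighborhood of $\Gamma$ identified above, $f$ is locally constant, hence smooth, so it suffices to control $f$ where $x_{\perp}+v_{\perp}^{2}$ is bounded below. There, by the Velocity Lemma, each trajectory undergoes only finitely many uniformly transversal reflections on $[0,T]$, and between reflections $\nabla_{x,v}\Phi$ solves the linear variational equation whose coefficient involves $\nabla_{x}E$ evaluated along the flow. Since $\nabla_{x}E$ is $C^{0,\mu}$ in $x$ and the flow is $C^{1}$ in $(x,v)$, this coefficient is H\"older continuous in $(x,v)$, so a Gronwall estimate gives $\nabla_{x,v}\Phi$ bounded and H\"older-$\lambda$ continuous for every $\lambda<\mu$; composing across the finitely many reflections with the $C^{4}$ reflection map and with $f_{0}\in C^{1,\mu}$ yields $f\in C^{1,\lambda}_{(x,v)}$, locally uniformly on this region and hence globally, since off this region $f$ is constant. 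The bounds on $f_{t}$ and the H\"older-in-time control of $\nabla_{x}f$, $\nabla_{v}f$ then follow by differentiating $f=f_{0}\circ\Phi$ in $t$ and using the corresponding $\partial_{t}\Phi$, $\partial_{t}\nabla\Phi$ bounds from the same ODE. I expect the main obstacle to be precisely this matching of the two regimes: a priori the characteristic flow need not even be Lipschitz in $(x,v)$ near $\Gamma$ (as recalled in the introduction), so one must make sure that no derivative of $f$ ever samples a point outside the region on which $f$ is constant, and this quarantining of the singularity is exactly what the two-sided Velocity-Lemma estimate, together with the flatness \eqref{S2flatness}, provides.
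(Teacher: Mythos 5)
Your proof is correct and follows essentially the same route the paper takes: construct $f$ by the method of characteristics, use the Velocity Lemma \ref{velocity} to guarantee finitely many transversal reflections away from $\Gamma$ and to quarantine trajectories that start near $\Gamma$ inside the region where the flatness condition \eqref{S2flatness} makes $f_0$ constant, then read off $C^{1;1,\lambda}_{t;(x,v)}$ regularity from the variational equation between reflections together with the compatibility conditions \eqref{S2E0b}--\eqref{S2E0c}; the paper's own proof is a two-sentence sketch deferring to Theorem 2 of \cite{HJHJJLV2}, and your proposal simply supplies the details that reference contains. One caveat: you (correctly) invoke the flatness condition, the compatibility conditions, and the Poisson--Dirichlet structure on $E$ with a nonnegative, continuity-equation-satisfying source that Lemma \ref{velocity} actually requires, none of which appear in the hypotheses of Theorem \ref{Thlinear} as printed --- they are left implicit there but are in force in the iterative scheme of Section 4, which is the theorem's only point of use, so your reading is the intended one.
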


\begin{proof}
The key point in the proof is that the characteristics (\ref{S3E1})-(\ref%
{S3E4}) intersect the boundary $\partial \Omega \times \mathbb{R}^{3}$ at
most a finite number of times and they never intersect with the singular
set, which is due to Velocity lemma and Lemma \ref{velocity}. The essential
procedure is similar to the proof of Theorem 2 in \cite{HJHJJLV2}, we skip
the details of the proof.
\end{proof}

\bigskip

\section{Iterative approach for the nonlinear problem}

In this section, we will show the global existence of classical solutions to
the fully nonlinear Vlasov-Poisson system (\ref{S1E1})-(\ref{S1E5}). Since
the procedures are similar to ones in \cite{HJHJJLV2}, we will not try to
give every detail of the proofs and instead we refer to \cite{HJHJJLV2}
whenever we need.

\subsection{Iterative procedure}

We will obtain a solution of the nonlinear system (\ref{S1E1})-(\ref{S1E5})
as the limit of a sequence of functions $f^{n}$ that are defined by an
iterative procedure. More precisely, we define
\begin{equation}
f^{0}\left( t,x,v\right) =f_{0}\left( x,v\right) \;\;,\;\;t\geq 0,\;x\in
\Omega ,\;v\in \mathbb{R}^{3}  \label{itf0}
\end{equation}%
\begin{align}
f_{t}^{n}+v\cdot \nabla _{x}f^{n}+\nabla _{x}\phi ^{n-1}\cdot \nabla
_{v}f^{n}& =0\;\;\;\;,\;\;\;\;x\in \Omega \subset \mathbb{R}%
^{3}\;\;\;,\;v\in \mathbb{R}^{3}\;\;,\;\;t>0  \label{itfn1} \\
\Delta \phi ^{n-1}& =\rho ^{n-1}\left( x\right) \equiv \int_{\mathbb{R}%
^{3}}f^{n-1}dv\;\;,\;\;x\in \Omega \;\;,\;\;t>0  \label{itfn2} \\
\phi ^{n-1}& =0\;\;,\;\;x\in \partial \Omega \;\;,\;\;t>0  \label{itfn3} \\
f^{n}\left( 0,x,v\right) & =f_{0}\left( x,v\right) \;\;\;x\in \Omega
\;\;,\;\;v\in \mathbb{R}^{3}  \label{itfn4} \\
f^{n}\left( t,x,v\right) & =f^{n}\left( t,x,v^{\ast }\right) \;\;x\in
\partial \Omega \;\;,\;\;v\in \mathbb{R}^{3}\;\;,\;\;t>0  \label{itfn5}
\end{align}%
for $n=1,2,....\;$We assume that $f_{0}$ satisfies the nonnegativity
condition as well as (\ref{S2E0b})-(\ref{S2flatness}).

\bigskip We will use the notation
\begin{equation}
E^{n}=\nabla \phi ^{n}.  \label{itfield}
\end{equation}

The goal is to show that the sequence $f^{n}$ converges as $n\rightarrow
\infty $ for all $0\leq t<\infty .$ To this end we need to show as a first
step that this sequence is globally defined in time for each $n\geq 0.$

\subsection{The iterative sequence $\left\{ f^{n}\right\} $ is globally
defined in time}

\bigskip

Given a function $g:\Omega \rightarrow \mathbb{R},$ we will denote as $\left[
\cdot \right] _{0,\lambda ;x}$ the seminorm
\begin{equation*}
\left[ g\right] _{0,\lambda ;x}\equiv \sup_{x,y\in \Omega }\frac{\left\vert
g\left( x\right) -g\left( y\right) \right\vert }{\left\vert x-y\right\vert
^{\lambda }}.
\end{equation*}

We define

\begin{equation}
Q\left( t\right) \equiv \sup \left\{ \left\vert v\right\vert ~|~\left(
x,v\right) \in \text{supp~}f\left( s\right) ,\text{ \ }0\leq s\leq t\right\}
.  \label{Q}
\end{equation}

\bigskip

\begin{prop}
Let $\mu ,\lambda \in \left( 0,1\right) $, satisfying $\mu >\lambda .$ Let $%
f_{0}\in C_{0}^{1,\mu }\left( \bar{\Omega}\times \mathbb{R}^{3}\right) ,$ $%
f_{0}\geq 0$ satisfy (\ref{S2flatness}). Then, the sequence of functions $%
f^{n}$ is globally defined for each $x\in \Omega ,\;v\in \mathbb{R}^{3}$ and
$0\leq t<\infty .$ Moreover we have $f^{n}\in C_{t;\left( x,v\right)
}^{1;1,\lambda }\left( \left[ 0,T\right] \times \Omega \times \mathbb{R}%
^{3}\right) $ for any $T>0$ and $\left\Vert f^{n}\right\Vert _{\infty
}=\left\Vert f_{0}\right\Vert _{\infty },\;\int \rho _{n}\left( x,t\right)
dx=\int f_{0}\left( t,x,v\right) dxdv.$
\end{prop}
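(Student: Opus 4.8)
The plan is to prove the proposition by induction on $n$, carrying along at each step the three assertions: global existence in time, the regularity $f^{n}\in C^{1;1,\lambda}_{t;(x,v)}$, and the conservation laws $\|f^{n}\|_{\infty}=\|f_{0}\|_{\infty}$, $\int\rho_{n}\,dx=\int f_{0}\,dxdv$. The base case $n=0$ is immediate from \eqref{itf0} since $f^{0}=f_{0}$. For the inductive step, assume $f^{n-1}$ is globally defined and lies in $C^{1;1,\lambda}_{t;(x,v)}([0,T]\times\Omega\times\mathbb{R}^{3})$ for every $T$, with the stated bounds. Then $\rho^{n-1}(t,x)=\int f^{n-1}\,dv$ is in $C^{1}$ in $t$ and Hölder in $x$, and moreover $\rho^{n-1}$ has compact support in $x$ uniformly on $[0,T]$ (since $f^{n-1}$ does), so $\|\rho^{n-1}\|_{\infty}$ and $\|j^{n-1}\|_{\infty}$ are controlled, where $j^{n-1}=\int vf^{n-1}\,dv$ satisfies $\partial_{t}\rho^{n-1}+\nabla\cdot j^{n-1}=0$ by integrating \eqref{itfn1} in $v$. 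Elliptic regularity for the Dirichlet problem \eqref{itfn2}--\eqref{itfn3} then gives $\phi^{n-1}\in C^{1;3,\lambda}_{t;x}$ and in particular $E^{n-1}=\nabla\phi^{n-1}\in C^{0;1,\mu'}_{t;x}([0,T]\times\bar{\Omega})$ for some $\mu'$ (one loses a bit of Hölder exponent but can keep it above $\lambda$), so the hypotheses of Theorem \ref{Thlinear} are met with this field.

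Next I would invoke Theorem \ref{Thlinear}: with $E^{n-1}$ in hand and $f_{0}\in C_{0}^{1,\mu}$, $f_{0}\geq 0$ satisfying the compatibility and flatness conditions, there is a unique solution $f^{n}\in C^{1;1,\lambda}_{t;(x,v)}([0,T]\times\Omega\times\mathbb{R}^{3})$ of the linear system \eqref{itfn1}, \eqref{itfn4}, \eqref{itfn5}, satisfying $f^{n}\geq 0$ and $\int f^{n}\,dxdv=\int f_{0}\,dxdv$. Since $f^{n}$ is transported along the characteristics of the field $E^{n-1}$ with specular reflection, which is measure-preserving, we also get $\|f^{n}(t)\|_{\infty}=\|f_{0}\|_{\infty}$ for all $t$, completing the conservation-law part. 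The one thing Theorem \ref{Thlinear} gives only on $[0,T]$ for each finite $T$ is the solution itself; but since $T$ is arbitrary and the estimates are local in time (the existence time in Theorem \ref{Thlinear} does not shrink to zero as long as $E^{n-1}$ stays in the stated class on $[0,T]$, which it does by the inductive hypothesis), the solution $f^{n}$ extends to all $t\in[0,\infty)$. This closes the induction.

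The one point requiring genuine care — and the main obstacle — is that the Velocity Lemma (Lemma \ref{velocity}), which underlies the proof of Theorem \ref{Thlinear}, requires the potential $\phi^{n-1}$ to satisfy the hypotheses of Lemma \ref{timederivative}, i.e.\ that $\rho^{n-1}$ obey a local mass-conservation law $\partial_{t}\rho^{n-1}+\nabla\cdot j^{n-1}=0$ with $j^{n-1}\in (C^{1}([0,T]\times\Omega))^{3}$. This holds because $f^{n-1}$ solves the transport equation \eqref{itfn1} at level $n-1$ with a $C^{1}$ field $E^{n-2}$: multiplying by $1$ and by $v$ and integrating in $v$ (the integrals converge and may be differentiated under the integral sign thanks to the compact $v$-support of $f^{n-1}$, which in turn follows from finiteness of $Q(t)$ on bounded time intervals, itself a consequence of the boundedness of $E^{n-2}$ on $[0,T]$) yields exactly the required continuity equation with $j^{n-1}=\int vf^{n-1}\,dv\in C^{1}$. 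Thus one must track, alongside the three stated properties, the auxiliary fact that each $f^{n}$ has compact $v$-support on every bounded time interval with $Q(t)$ finite; this is routine once $E^{n-1}\in L^{\infty}_{t,x}$ but it is the hypothesis that makes the whole iteration well-posed, and I would state it explicitly in the induction. The remaining verifications — that the compatibility conditions \eqref{S2E0b}--\eqref{S2E0c} and flatness \eqref{S2flatness} are preserved at each stage so that Theorem \ref{Thlinear} applies, and that the Hölder exponents can be chosen uniformly in $n$ — are handled exactly as in the corresponding argument in \cite{HJHJJLV2}, to which I would refer.
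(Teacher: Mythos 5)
Your proposal is correct and follows precisely the route the paper indicates (induction on $n$ together with Theorem \ref{Thlinear}), with the difference that the paper merely states this plan and omits all details, while you actually supply them: the elliptic gain making $E^{n-1}$ admissible for the linear theorem, the derivation of the continuity equation $\partial_t\rho^{n-1}+\nabla\cdot j^{n-1}=0$ needed for Lemmas \ref{timederivative}--\ref{velocity}, and the bookkeeping of compact $v$-support and Hölder exponents across iterations. Your observation that the compact-support/$Q^{n}$-finiteness fact must be tracked as an auxiliary inductive hypothesis is a genuine (if minor) point the paper glosses over.
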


\begin{proof}
This proposition can be proved using Theorem \ref{Thlinear} and by induction
on $n$. We omit the details.
\end{proof}

\bigskip

\subsection{The sequence $\left\{ f^{n}\right\} $ converges to a solution of
the VP system if the sequence $\left\{ Q^{n}\right\} $ is bounded.}

\bigskip

We define the following measure for the maximal velocities reached for the
distribution $f^{n}$

\begin{equation}
Q^{n}\left( t\right) \equiv\sup\left\{ \left| v\right| ~|~\left( x,v\right)
\in\text{supp~}f^{n}\left( s\right) ,\text{ \ }0\leq s\leq t\right\} .
\label{Qndef}
\end{equation}

\begin{prop}
\label{Propconv}Under the assumptions of Theorem \ref{globalexistence},
suppose that $Q^{n}\left( t\right) \leq K$ for $n\geq n_{0},\;0\leq t\leq T.$
Then, $f^{n}\rightarrow f$ in $C_{t;\left( x,v\right) }^{\nu ;1,\lambda
}\left( \left[ 0,T\right] \times \Omega \times \mathbb{R}^{3}\right) $ as $%
n\rightarrow \infty $ with $0<\lambda <\mu ,\;0<\nu <1$ and where $f\in
C_{t;\left( x,v\right) }^{1;1,\lambda }\left( \left[ 0,T\right] \times
\Omega \times \mathbb{R}^{3}\right) $ is a solution of (\ref{S1E1})-(\ref%
{S1E5}).
\end{prop}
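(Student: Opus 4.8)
The plan is to show that, under the uniform velocity bound $Q^{n}(t)\le K$, the sequence $\{f^{n}\}$ is Cauchy in the weaker norm $C^{\nu;1,\lambda}_{t;(x,v)}$ while remaining bounded in the stronger norm $C^{1;1,\lambda}_{t;(x,v)}$; an interpolation/compactness argument then produces the limit $f$ with the claimed regularity, and passing to the limit in the iterative equations (\ref{itfn1})--(\ref{itfn5}) shows $f$ solves (\ref{S1E1})--(\ref{S1E5}). First I would record the consequences of the hypothesis $Q^{n}(t)\le K$: since $f^{n}$ has $v$-support in $\{|v|\le K\}$ and $\|f^{n}\|_{\infty}=\|f_{0}\|_{\infty}$, the densities $\rho^{n}$ are uniformly bounded in $L^{\infty}([0,T]\times\Omega)$, and by elliptic regularity for the Dirichlet problem $\phi^{n}\in C^{1;3,\lambda}_{t;x}$ with norm bounded uniformly in $n$; in particular $E^{n}=\nabla\phi^{n}$ is uniformly bounded in $C^{0;1,\mu}_{t;x}$. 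Feeding this into Theorem \ref{Thlinear} (and its quantitative form from \cite{HJHJJLV2}) together with the Velocity Lemma \ref{velocity}, whose constants depend only on $\Omega$, $\|\rho^{n}\|_{\infty}\le\|f_{0}\|_{\infty}$ and $\|j^{n}\|_{\infty}\le K\|f_{0}\|_{\infty}$ and are therefore uniform in $n$, I obtain a uniform bound $\|f^{n}\|_{C^{1;1,\lambda}_{t;(x,v)}([0,T]\times\Omega\times\mathbb{R}^{3})}\le M$ for all $n\ge n_{0}$.

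Next I would set up the contraction estimate for the differences. Writing $g^{n}=f^{n+1}-f^{n}$ and $\psi^{n}=\phi^{n}-\phi^{n-1}$, subtracting the equations (\ref{itfn1}) for consecutive indices gives a transport equation for $g^{n}$ along the characteristics of the field $E^{n}$, with source term $\nabla_{x}\psi^{n-1}\cdot\nabla_{v}f^{n}$. Since $\Delta\psi^{n-1}=\rho^{n-1}-\rho^{n-2}=\int(f^{n-1}-f^{n-2})\,dv$ and the $v$-supports are contained in $\{|v|\le K\}$, one has $\|\nabla_{x}\psi^{n-1}\|_{\infty}\le C K\,\|g^{n-2}\|_{L^{\infty}}$ by elliptic estimates. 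Integrating the transport equation for $g^{n}$ along characteristics from the initial data $g^{n}(0,\cdot)=0$, and using the uniform bound $\|\nabla_{v}f^{n}\|_{\infty}\le M$ together with the uniform-in-$n$ control on the characteristic flow furnished by Lemma \ref{velocity} (so that the finitely many specular reflections do not spoil the estimate), yields
\begin{equation*}
\sup_{0\le t\le T}\|g^{n}(t)\|_{L^{\infty}}\le C(T,K,M)\int_{0}^{T}\|g^{n-2}(s)\|_{L^{\infty}}\,ds,
\end{equation*}
which by iteration shows $\sum_{n}\sup_{t}\|g^{n}(t)\|_{L^{\infty}}<\infty$, so $f^{n}\to f$ uniformly on $[0,T]\times\Omega\times\mathbb{R}^{3}$.

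Finally I would upgrade the uniform convergence to convergence in $C^{\nu;1,\lambda}_{t;(x,v)}$. Because $\{f^{n}\}$ is bounded in $C^{1;1,\lambda}_{t;(x,v)}$, the gradients $\nabla_{x,v}f^{n}$ and the time derivatives $f^{n}_{t}$ are uniformly bounded and uniformly $\lambda$-Hölder (in $(x,v)$) resp. equicontinuous in $t$; by Arzelà--Ascoli, along a subsequence $\nabla f^{n}$ and $f^{n}_{t}$ converge uniformly on compact sets, and the limit must coincide with $\nabla f$, $f_{t}$ for the $f$ already identified, so the full sequence converges. Interpolating the $C^{1}$ bound against the $L^{\infty}$ convergence gives convergence of $\nabla f^{n}$ in the Hölder seminorm of any exponent $\lambda<\mu$ and convergence of $f^{n}_{t}$ with a time modulus $\nu<1$; lower semicontinuity of the norms keeps $f\in C^{1;1,\lambda}_{t;(x,v)}$. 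With $f^{n}\to f$, $E^{n-1}\to E:=\nabla_{x}\phi$ (where $\Delta\phi=\int f\,dv$, $\phi|_{\partial\Omega}=0$) in $C^{0;1,\lambda}_{t;x}$, and the characteristics converging, one passes to the limit in (\ref{itfn1})--(\ref{itfn5}) to conclude that $f$ is a classical solution of (\ref{S1E1})--(\ref{S1E5}) on $[0,T]$, and $Q(t)\le K$ is inherited from the uniform support bound. The main obstacle I anticipate is the careful handling of the characteristic flow near the singular set $\Gamma$ when estimating the $C^{1}$ norms and the differences: one must use Lemma \ref{velocity} to guarantee that trajectories starting off $\Gamma$ stay quantitatively away from it, uniformly in $n$, so that the (at most finitely many) specular reflections depend continuously on data and the Hölder estimates for $\nabla_{x,v}f^{n}$ survive the reflection map — this is exactly where the convexity of $\Omega$ and the boundary estimates of Lemmas \ref{timederivative}--\ref{tangentialderivative} are essential, and it is the step that cannot simply be copied verbatim from the half-space argument of \cite{HJHJJLV1}.
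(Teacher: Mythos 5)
The paper itself gives no proof of Proposition~\ref{Propconv}; it simply defers to Proposition~3 of \cite{HJHJJLV2}. Your outline is exactly the standard iterative-scheme argument that reference uses: uniform $C^{1;1,\lambda}_{t;(x,v)}$ bounds from $Q^{n}\le K$ via Theorem~\ref{Thlinear} and Lemma~\ref{velocity}, an $L^{\infty}$ contraction for the differences, Arzel\`a--Ascoli plus interpolation to upgrade to $C^{\nu;1,\lambda}$ convergence, and passage to the limit; so the approach is essentially the same.

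One small bookkeeping slip worth fixing: with $g^{n}=f^{n+1}-f^{n}$ and $\psi^{n}=\phi^{n}-\phi^{n-1}$, subtracting the $n{+}1$ and $n$ instances of (\ref{itfn1}) produces the source $-\nabla_{x}\psi^{n}\cdot\nabla_{v}f^{n}$ (not $\psi^{n-1}$), and then $\Delta\psi^{n}=\rho^{n}-\rho^{n-1}=\int g^{n-1}\,dv$, so the contraction reads $\sup_{t}\|g^{n}(t)\|_{\infty}\lesssim\int_{0}^{T}\|g^{n-1}(s)\|_{\infty}\,ds$ with a constant proportional to $K^{3}$ (the $v$-support volume), rather than the $g^{n-2}$ and $K$ you wrote. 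This offset does not affect the summability of $\sum_{n}\sup_{t}\|g^{n}(t)\|_{\infty}$, so the conclusion stands; the rest of the argument — in particular your identification of Lemma~\ref{velocity} as the tool ensuring that the finitely many specular reflections, and their dependence on the iterate, behave uniformly in $n$ — is the right emphasis and is where the present paper's boundary lemmas (Lemmas~\ref{timederivative} and \ref{tangentialderivative}) enter.
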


\begin{proof}
The proof of the Proposition \ref{Propconv} is similar to that of
Proposition 3 in \cite{HJHJJLV2} and we omit it.
\end{proof}

\bigskip

\subsection{Prolongability of uniform estimates for the functions $f^{n}$}

\bigskip

\begin{prop}
\label{Qconv}Let $Q^{n},\;Q$ be as in (\ref{Qndef}), (\ref{Q}) respectively.
Suppose that $\max \left\{ \sup_{n\geq n_{0}}Q^{n}\left( t\right) ,Q\left(
t\right) \right\} \leq K$ for $0\leq t\leq T.$ We also assume that $%
f^{n}\rightarrow f$ in $C_{t;\left( x,v\right) }^{\nu ;1,\lambda }\left( %
\left[ 0,T\right] \times \Omega \times \mathbb{R}^{3}\right) $ for any $%
0<\lambda <\mu ,\;0<\nu <1.$ Then $\lim_{n\rightarrow \infty }Q^{n}\left(
t\right) =Q\left( t\right) \;$uniformly on $\left[ 0,T\right] .$
\end{prop}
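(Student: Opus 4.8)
The plan is to show convergence of the maximal velocities by splitting the difference $Q^{n}(t)-Q(t)$ into an "upper" and a "lower" half and controlling each via the characteristic flow. First I would recall that, because the support of $f^{n}(t)$ is transported along the characteristics $(X^{n},V^{n})$ of the linear problem with field $E^{n-1}=\nabla\phi^{n-1}$, and similarly $\mathrm{supp}\,f(t)$ along $(X,V)$ with field $E=\nabla\phi$, one has for every $(x,v)\in\mathrm{supp}\,f^{n}(s)$ the bound
\begin{equation*}
|v|\leq |v_{0}|+\int_{0}^{s}|E^{n-1}(\tau,X^{n}(\tau))|\,d\tau,
\end{equation*}
with $(x_{0},v_{0})\in\mathrm{supp}\,f_{0}$, and an analogous statement for $f$. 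Since $\|f^{n}\|_{\infty}=\|f_{0}\|_{\infty}$ and $\int\rho^{n}\,dx=\int f_{0}\,dxdv$ with $\mathrm{supp}\,f^{n}(s)$ contained in velocities $\leq K$, the densities $\rho^{n}(\tau,\cdot)$ are bounded in $L^{\infty}$ and in $L^{1}$ uniformly in $n$ and $\tau\in[0,T]$ (the standard interpolation: $\rho^{n}=\int_{|v|\leq K}f^{n}\,dv\leq C K^{3}\|f_{0}\|_{\infty}$). Hence by elliptic regularity the fields $E^{n}$ are bounded in $C([0,T]\times\bar\Omega)$ uniformly in $n$; this gives the \emph{a priori} bound $Q^{n}(t),Q(t)\leq K'$ but more importantly sets up the stability estimate.

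The main step is the continuous dependence of the flow on the field. Because $f^{n}\to f$ in $C^{\nu;1,\lambda}_{t;(x,v)}$, we have $\rho^{n}\to\rho$ uniformly on $[0,T]\times\bar\Omega$ (integrating over the fixed compact velocity ball $|v|\leq K$), and therefore by elliptic regularity for the Dirichlet problem $E^{n-1}=\nabla\phi^{n-1}\to E=\nabla\phi$ uniformly on $[0,T]\times\bar\Omega$. Now I would compare the characteristics: writing the equations for $X^{n}-X$ and $V^{n}-V$ with the same initial point and using that $E$ is Lipschitz in $x$ (it lies in $C^{0;1,\mu}_{t;x}$, uniformly) together with $\|E^{n-1}-E\|_{\infty}\to 0$, Gronwall gives
\begin{equation*}
\sup_{0\leq s\leq t}\bigl(|X^{n}(s)-X(s)|+|V^{n}(s)-V(s)|\bigr)\leq C(T)\,\|E^{n-1}-E\|_{C([0,T]\times\bar\Omega)}\longrightarrow 0
\end{equation*}
uniformly over initial data in the fixed compact $\mathrm{supp}\,f_{0}$. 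The one subtlety here, which is the genuine obstacle, is that the characteristics undergo specular reflections at $\partial\Omega$, so a naive Gronwall argument breaks down at reflection times; this is exactly where the Velocity Lemma (Lemma \ref{velocity}) enters. Since the fields satisfy the hypotheses of Lemma \ref{timederivative} uniformly in $n$ (local mass conservation holds for each $f^{n}$ with current $j^{n}=\int v f^{n}\,dv$, bounded in $L^{\infty}$), the quantity $X_{\perp}+V_{\perp}^{2}$ is comparable along each trajectory with constants independent of $n$; in particular trajectories in the support stay a definite distance from the singular set $\Gamma$, the reflections occur transversally with normal speed bounded below, and there are only finitely many of them on $[0,T]$, with the reflection map depending continuously on the incoming data. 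One then propagates the Gronwall estimate across each reflection, picking up only bounded multiplicative constants, to obtain uniform closeness of the flows on all of $[0,T]$.

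Finally I would conclude. Given $\varepsilon>0$, pick $(x,v)\in\mathrm{supp}\,f^{n}(s)$ with $|v|$ within $\varepsilon$ of $Q^{n}(t)$; its backward characteristic lands at some $(x_{0},v_{0})\in\mathrm{supp}\,f_{0}$, and the forward characteristic of $f$ from $(x_{0},v_{0})$ stays within $C(T)\|E^{n-1}-E\|_{\infty}$ of $(X^{n},V^{n})$, hence $Q^{n}(t)\leq Q(t)+\varepsilon+C(T)\|E^{n-1}-E\|_{\infty}$. Symmetrically, approximating the supremum defining $Q(t)$ by a point on $\mathrm{supp}\,f(s)$ and following its characteristic forward with field $E^{n-1}$ (whose endpoint lies in $\mathrm{supp}\,f^{n}(s)$ up to the same error) gives $Q(t)\leq Q^{n}(t)+\varepsilon+C(T)\|E^{n-1}-E\|_{\infty}$. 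Letting $n\to\infty$ and then $\varepsilon\to 0$ yields $Q^{n}(t)\to Q(t)$, and since the error bound $C(T)\|E^{n-1}-E\|_{C([0,T]\times\bar\Omega)}$ is independent of $t\in[0,T]$, the convergence is uniform on $[0,T]$. Since the argument follows closely the corresponding step in \cite{HJHJJLV2}, with Lemma \ref{velocity} substituting for the velocity lemma used there, I would only sketch the reflection bookkeeping and refer to \cite{HJHJJLV2} for the routine details.
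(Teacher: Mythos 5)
Your proposal is correct and follows essentially the same approach as the paper: the Velocity Lemma (Lemma \ref{velocity}) applied uniformly in $n$ (via Lemmas \ref{timederivative}--\ref{tangentialderivative} with the uniformly bounded $\rho^n$ and $j^n$) keeps characteristics in $\mathrm{supp}\,f^{n}$ away from the singular set, so reflections are transversal and finite in number, and then uniform convergence of $E^{n-1}\to E$ (from $\rho^{n}\to\rho$ and elliptic regularity) plus Gronwall across reflections gives the two-sided comparison of $Q^{n}$ and $Q$; the paper's own proof is a brief sketch citing the same Velocity Lemma and deferring the reflection bookkeeping to \cite{HJHJJLV2}. The only small thing you leave implicit is that the flatness condition (\ref{S2flatness}) is what guarantees $\alpha(0)\geq C\delta_{0}$ on $\mathrm{supp}\,f_{0}$, i.e.\ that trajectories start a definite distance from $\Gamma$ before Lemma \ref{velocity} propagates this forward.
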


\begin{proof}
The proof relies on Velocity lemma, Lemma \ref{velocity}. The
characteristics starting in $\alpha \left( 0\right) \geq C\delta _{0}$
remain during their evolution in the set $\left\{ \alpha \left( t\right)
\geq C\delta _{0}\right\} $ due to Lemma \ref{velocity}. Therefore, these
characteristics remain separated from the singular set from which we can
deduce the convergence of $Q^{n}\left( t\right) $ to $Q\left( t\right) $ as $%
n\rightarrow \infty .$ For the details, refer to \cite{HJHJJLV2}.
\end{proof}

\bigskip

The following Proposition concerns the prolongability of the uniform
estimates on $Q^{n}\left( t\right) $ and we skip the proof.

\begin{prop}
\label{Prolog}Suppose that for some $T\geq 0$ there exist $K>0$ and $%
n_{0}\geq 0$ such that for any $n\geq n_{0}$ and $0\leq t\leq T$ we have $%
Q^{n}\left( t\right) \leq K.$ Then, there exists $\varepsilon
_{0}=\varepsilon _{0}\left( K,\left\Vert f_{0}\right\Vert _{\infty }\right)
>0$ such that for $0\leq t\leq T+\varepsilon _{0}$ and $n\geq n_{0}$ the
following estimate holds
\begin{equation*}
Q^{n}\left( t\right) \leq 2K.
\end{equation*}
\end{prop}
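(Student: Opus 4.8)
The plan is to bootstrap from the uniform bound $Q^{n}(t)\le K$ on $[0,T]$ to a slightly larger time interval using only the quantitative control that the characteristic velocities cannot grow faster than the field allows. First I would fix $n\ge n_{0}$ and recall that along the characteristic curves $(X^{n},V^{n})$ associated to the field $E^{n-1}=\nabla_{x}\phi^{n-1}$ one has $\frac{d}{ds}|V^{n}(s)|\le |E^{n-1}(s,X^{n}(s))|$, so that for any $t$ in the extended interval,
\begin{equation*}
Q^{n}(t)\le Q^{n}(T)+\int_{T}^{t}\sup_{x\in\bar\Omega}|E^{n-1}(s,x)|\,ds\le K+\int_{T}^{t}\|E^{n-1}(s,\cdot)\|_{\infty}\,ds.
\end{equation*}
The crux is therefore a bound on $\|E^{n-1}(s,\cdot)\|_{\infty}=\|\nabla_{x}\phi^{n-1}(s,\cdot)\|_{\infty}$ in terms of $K$ and $\|f_{0}\|_{\infty}$ alone, valid for $s$ slightly beyond $T$.

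To get this, I would estimate $\rho^{n-1}(s,x)=\int_{\mathbb R^{3}}f^{n-1}(s,x,v)\,dv$. Since $f^{n-1}$ is transported by a measure-preserving flow, $\|f^{n-1}(s)\|_{\infty}=\|f_{0}\|_{\infty}$, and its $v$-support lies in $\{|v|\le Q^{n-1}(s)\}$; as long as we are on the interval where the inductive hypothesis $Q^{n-1}\le K$ still applies this gives $\|\rho^{n-1}(s)\|_{\infty}\le C\|f_{0}\|_{\infty}K^{3}$, with $C$ depending only on the dimension. Standard elliptic estimates for the Dirichlet problem $\Delta\phi^{n-1}=\rho^{n-1}$ on the fixed bounded domain $\Omega$ then yield $\|\nabla_{x}\phi^{n-1}(s)\|_{\infty}\le C(\Omega)\bigl(\|\rho^{n-1}(s)\|_{\infty}+\|\rho^{n-1}(s)\|_{\infty}^{?}\bigr)$ — more carefully, one uses that $\rho^{n-1}$ is bounded and compactly supported in $x$ inside $\Omega$, so $\|\nabla\phi^{n-1}\|_{\infty}$ is controlled by $\|\rho^{n-1}\|_{\infty}$ times a constant depending only on $L=\operatorname{diam}\Omega$. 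Altogether $\|E^{n-1}(s,\cdot)\|_{\infty}\le M$ where $M=M(K,\|f_{0}\|_{\infty},\Omega)$.

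Plugging this back in, for $t\in[T,T+\varepsilon_{0}]$ I get $Q^{n}(t)\le K+M\,\varepsilon_{0}$, so choosing $\varepsilon_{0}:=K/M$, which depends only on $K$ and $\|f_{0}\|_{\infty}$ (and the fixed domain), gives $Q^{n}(t)\le 2K$ on $[0,T+\varepsilon_{0}]$ for every $n\ge n_{0}$. One subtlety to handle is circularity in the induction: the bound on $\|\rho^{n-1}\|_{\infty}$ needs $Q^{n-1}\le K$ on the \emph{same} extended interval. I would resolve this by running the argument for all $n\ge n_{0}$ simultaneously — since $Q^{n_{0}-1}$ is already globally controlled on $[0,T]$ and the estimate above propagates the bound $Q^{n}\le 2K$ forward one index at a time with the \emph{same} $\varepsilon_{0}$, an induction on $n$ closes the loop. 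The main obstacle is thus not the ODE estimate but ensuring the elliptic bound on $\nabla\phi^{n-1}$ is uniform in $n$ and in $s$ up to the extended time; once the domain is fixed and $\rho^{n-1}$ is uniformly bounded and supported away from $\partial\Omega$ this is routine, and the rest is a one-line Gronwall-type continuation argument.
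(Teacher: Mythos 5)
Your approach is the standard one for this type of continuation estimate, and the main chain of inequalities — $\frac{d}{ds}|V^{n}|\le |E^{n-1}|$ along characteristics (reflections preserve $|V|$), the support bound $\|\rho^{n-1}(s)\|_{\infty}\le \frac{4\pi}{3}\|f_{0}\|_{\infty}\,Q^{n-1}(s)^{3}$, the elliptic estimate $\|\nabla\phi^{n-1}(s)\|_{\infty}\le C(\Omega)\|\rho^{n-1}(s)\|_{\infty}$ for the Dirichlet problem on a fixed bounded domain, and then the choice $\varepsilon_{0}=K/M$ with $M=M(K,\|f_{0}\|_{\infty},\Omega)$ — is exactly what is needed. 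The paper skips this proof and refers to \cite{HJHJJLV2}, but the omitted argument is the one you describe. Two small points of precision: first, $\rho^{n-1}$ is not compactly supported \emph{inside} $\Omega$ (the characteristics do reach $\partial\Omega$ by specular reflection), but this is irrelevant because the $W^{2,p}\hookrightarrow C^{1,\alpha}$ estimate for the Dirichlet Laplacian on a smooth bounded domain requires only $\rho\in L^{\infty}(\Omega)$, not interior support.

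Second, and more substantively, the induction you sketch has a genuine loose end at the base case $n=n_{0}$: controlling $E^{n_{0}-1}$ on $[T,T+\varepsilon_{0}]$ requires a bound on $Q^{n_{0}-1}$ there, which is \emph{not} supplied by the hypothesis (the hypothesis only covers indices $\ge n_{0}$). Your remark that ``$Q^{n_{0}-1}$ is already globally controlled on $[0,T]$'' is not quite right — nothing in the hypothesis says $Q^{n_{0}-1}\le K$, only that $f^{n_{0}-1}$ is globally defined, so $Q^{n_{0}-1}$ is finite but potentially much larger than $K$, and using that bound would make $\varepsilon_{0}$ depend on data outside the allowed list $(K,\|f_{0}\|_{\infty})$. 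The standard fix is cosmetic: either state the conclusion for $n\ge n_{0}+1$, or replace $n_{0}$ by $n_{0}-1$ in the hypothesis; once one index is seeded, your one-step induction with the \emph{same} $\varepsilon_{0}$ then propagates cleanly, since for $n\ge n_{0}+1$ the field $E^{n-1}$ is controlled by $Q^{n-1}$ with $n-1\ge n_{0}$. You correctly identify the circularity as the main subtlety; just be explicit that it is resolved by an index shift rather than by any control on $Q^{n_{0}-1}$ itself.
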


We give in the following some of basic energy estimates for the
Vlasov-Poisson system (cf. \cite{RG}). Proofs are standard in kinetic theory
and we omit them.

\begin{prop}
Suppose that $f$ is a solution of (\ref{S1E1})-(\ref{S1E5}) defined in $%
0\leq t\leq T$ with $f\left( 0,x,v\right) =f_{0}\left( x,v\right) $, where $%
f_{0}\geq 0$. There exists $C$ depending only on $T$ and on the regularity
norms assumed for $f_{0}$ in Theorem \ref{globalexistence} such that
\begin{align}
\sup_{0\leq t\leq T}\int_{\Omega }v^{2}f\left( x,t\right) dvdx& \leq C
\label{S2E1a} \\
\sup_{0\leq t\leq T}\left[ \left\Vert \rho \left( t,\cdot \right)
\right\Vert _{L^{\frac{5}{3}}\left( \Omega \right) }\right] & \leq C
\label{S2E1b}
\end{align}%
\begin{equation}
\left\Vert f\left( t\right) \right\Vert _{L^{p}\left( \Omega \times \mathbb{R%
}^{3}\right) }=\left\Vert f_{0}\right\Vert _{L^{p}\left( \Omega \times
\mathbb{R}^{3}\right) }\;,\text{ \ for all }1\leq p\leq \infty .
\label{S2E2b}
\end{equation}%
\begin{equation}
\frac{d}{dt}\left( \int_{\Omega \times \mathbb{R}^{3}}v^{2}fdxdv+\int_{%
\Omega }\left( E\right) ^{2}dx\right) =0  \label{Eniter}
\end{equation}
\end{prop}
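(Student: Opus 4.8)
The plan is to prove the four assertions in the order \eqref{S2E2b}, \eqref{Eniter}, \eqref{S2E1a}, \eqref{S2E1b}, since the last two are consequences of the first two. The only genuinely delicate point will be justifying the integrations by parts on $\Omega$ and the vanishing of the boundary fluxes; this is where the regularity granted by Theorem \ref{globalexistence} ($f\in C^{1;1,\lambda}_{t;(x,v)}$ with compact support in $v$, $\phi\in C^{1;3,\lambda}_{t;x}$) and the $C^{5}$ smoothness of $\partial\Omega$ are used, and once these are in place every step below is a direct computation.

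\emph{Conservation of the $L^{p}$ norms.} The characteristic flow $(x,v)\mapsto(X(s;t,x,v),V(s;t,x,v))$ of \eqref{S3E1}--\eqref{S3E2} is generated by the vector field $(v,E(t,x))$, which is divergence-free in the phase variables because $E$ is independent of $v$; hence the flow preserves Lebesgue measure on $\Omega\times\mathbb{R}^{3}$, and the specular reflection $v\mapsto v^{\ast}$ of \eqref{S1E4} has unit Jacobian and preserves $|v|$. Since $f$ is constant along characteristics, the distribution of the values of $f(t,\cdot,\cdot)$ equals that of $f_{0}$; combined with $f\ge 0$ (from Theorem \ref{Thlinear} along the iteration, or directly from the characteristic representation) this gives $\|f(t)\|_{L^{p}(\Omega\times\mathbb{R}^{3})}=\|f_{0}\|_{L^{p}(\Omega\times\mathbb{R}^{3})}$ for all $1\le p\le\infty$, i.e.\ \eqref{S2E2b}.

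\emph{The energy identity.} Multiplying \eqref{S1E1} by $|v|^{2}$, integrating over $\Omega\times\mathbb{R}^{3}$, and writing the transport terms in divergence form (using that $E$ is $v$-independent) gives
\[
\frac{d}{dt}\int_{\Omega\times\mathbb{R}^{3}}|v|^{2}f\,dx\,dv+\int_{\partial\Omega}\int_{\mathbb{R}^{3}}|v|^{2}f\,(v\cdot n_{x})\,dv\,dS_{x}=2\int_{\Omega}E\cdot j\,dx,\qquad j(t,x):=\int_{\mathbb{R}^{3}}vf\,dv.
\]
The boundary integral vanishes since, by \eqref{S1E4}, its integrand is odd under $v\mapsto v^{\ast}$, which preserves $|v|^{2}$ and $dv$ and reverses $v\cdot n_{x}$. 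On the field side, using $\Delta\phi=\rho$, the Dirichlet condition \eqref{S1E5} (so that $\phi_{t}$ also vanishes on $\partial\Omega$), self-adjointness of the Dirichlet Laplacian, and the local conservation law \eqref{localmassconservation}, two integrations by parts yield
\[
\frac{d}{dt}\int_{\Omega}|E|^{2}\,dx=\frac{d}{dt}\int_{\Omega}|\nabla\phi|^{2}\,dx=-2\int_{\Omega}\phi\,\partial_{t}\rho\,dx=2\int_{\Omega}\phi\,\nabla\cdot j\,dx=-2\int_{\Omega}E\cdot j\,dx.
\]
Adding the two displays gives \eqref{Eniter}.

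\emph{Kinetic energy bound and interpolation.} Integrating \eqref{Eniter} in time gives $\int|v|^{2}f(t)\,dx\,dv\le\int|v|^{2}f_{0}\,dx\,dv+\int_{\Omega}|\nabla\phi(0,\cdot)|^{2}\,dx$, and the right-hand side is finite because $f_{0}\in C^{1,\mu}_{0}$ forces $\rho(0,\cdot)\in L^{\infty}$ with compact support, whence $\phi(0,\cdot)\in W^{2,q}(\Omega)$ for all $q<\infty$ and $\nabla\phi(0,\cdot)\in L^{\infty}(\Omega)$ on the bounded domain; this is \eqref{S2E1a}. For \eqref{S2E1b} one uses the classical velocity-space interpolation: writing $\rho(t,x)=\int_{|v|\le r}f\,dv+\int_{|v|>r}f\,dv\le Cr^{3}\|f_{0}\|_{\infty}+r^{-2}\int_{\mathbb{R}^{3}}|v|^{2}f\,dv$ and optimizing in $r>0$ yields the pointwise bound $\rho(t,x)\le C\|f_{0}\|_{\infty}^{2/5}\bigl(\int_{\mathbb{R}^{3}}|v|^{2}f(t,x,v)\,dv\bigr)^{3/5}$; raising to the power $5/3$, integrating over $\Omega$, and invoking \eqref{S2E1a} together with \eqref{S2E2b} gives $\|\rho(t,\cdot)\|_{L^{5/3}(\Omega)}\le C$ uniformly on $[0,T]$. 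As flagged above, the main obstacle is not in any of these identities but in carrying out the boundary integrations by parts rigorously within the stated regularity class, which is why the proof is only indicated in the text.
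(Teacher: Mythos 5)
Your proof is correct and is precisely the standard kinetic-theory argument the authors invoke (and omit): Liouville measure preservation plus unit-Jacobian reflection for \eqref{S2E2b}, the cancellation between the transport boundary flux (odd under $v\mapsto v^{\ast}$) and the field-side term $\pm 2\int_\Omega E\cdot j\,dx$ for \eqref{Eniter} using both $\phi=0$ and $\phi_t=0$ on $\partial\Omega$, and the usual $r$-optimized velocity interpolation for \eqref{S2E1b}. Nothing to correct.
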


\bigskip

\section{\protect\bigskip Global bound for $Q\left( t\right) .$}

In this section we show that the function $Q\left( t\right) $ can be bounded
in any time interval $0\leq t\leq T$ and therefore that the corresponding
solutions of (\ref{S1E1})-(\ref{S1E5}) can be extended to arbitrarily long
intervals. The global-in-time bound on $Q\left( t\right) $ was first proved
by Pfaffelmoser (cf. \cite{KP}) in the case of the whole space and the
method of Pfaffelmoser has been adapted to the case of bounded domains with
purely reflected boundary conditions at $\partial \Omega $ (cf. \cite%
{HJHJJLV2}). The main content of the result is a uniform estimate for $%
Q\left( t\right) $ as long as $f$ is defined.

From the definition (\ref{Q}) of $Q\left( t\right) ,$ we obtain the
following estimate
\begin{equation}
\left\Vert \rho \right\Vert _{\infty }\leq \left\Vert f\right\Vert _{\infty
}Q\left( t\right) ^{3}.  \label{rhoestimate}
\end{equation}%
where $\rho $ is in (\ref{S1E2}).

The main result of this section is given in the following. Since the theorem
and its proof do not depend on the boundary conditions for the electric
potential $\phi $ (whether Dirichlet or Neumann), they can be proved as in
Theorem 3 in \cite{HJHJJLV2}. We state the theorem without its proof and
without auxiliary lemmas.

\begin{thm}
\label{boundforQ}Let $f_{0}\in C^{1,\mu }\left( \Omega \times \mathbb{R}%
^{3}\right) $ with $0<\mu <1.$ Suppose that $f\in C_{t,\left( x,v\right)
}^{1;1,\lambda }\left( [0,T]\times \Omega \times \mathbb{R}^{3}\right) $ is
a solution of (\ref{S1E1})-(\ref{S1E5}) with $\lambda \in \left( 0,1\right)
,\;0<T<\infty .$ There exists $\sigma \left( T\right) <\infty $ depending
only on $T,\;Q\left( 0\right) ,$ and $\left\Vert f_{0}\right\Vert _{C^{1,\mu
}\left( \Omega \times \mathbb{R}^{3}\right) }$ such that
\begin{equation}
Q\left( t\right) \leq \sigma \left( T\right) \;\;,\;\;0\leq t\leq T\;.\;
\label{Qbound}
\end{equation}
\end{thm}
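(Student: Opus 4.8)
The plan is to adapt Pfaffelmoser's method to the bounded convex domain with specular reflection, following the scheme in \cite{HJHJJLV2} but keeping track of the new difficulty that the field $E=\nabla_x\phi$ is no longer given by the Newtonian potential in the whole space but by the Dirichlet Green function of $\Omega$. The first step is to obtain a pointwise bound on the field $E(t,x)$ in terms of $\|\rho(t,\cdot)\|_{L^\infty}$ and the velocity support $Q(t)$. Writing $\phi(t,x)=\int_\Omega G(x,y)\rho(t,y)\,dy$ one splits, for any parameter $r>0$, the integral over $|x-y|<r$, over $r\le |x-y|$ with $y$ away from $\partial\Omega$, and a near-boundary piece. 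Using the standard estimate $|\nabla_x G(x,y)|\le C|x-y|^{-2}$ together with the interpolation bound $\|\rho\|_{L^{5/3}}\le C$ from \eqref{S2E1b} and $\|\rho\|_{L^\infty}\le \|f\|_\infty Q(t)^3$ from \eqref{rhoestimate}, one recovers the classical field estimate
\begin{equation*}
\|E(t,\cdot)\|_{L^\infty(\Omega)}\le C\bigl(1+\|\rho(t,\cdot)\|_{L^\infty}^{2/3}\bigr)\le C\bigl(1+Q(t)^2\bigr),
\end{equation*}
which is exactly the input one has in the whole-space problem. The boundary corrections are controlled using the same kind of refined Green-function estimates and supersolution constructions already carried out in Lemmas \ref{timederivative} and \ref{tangentialderivative}; since $G$ is nonpositive and dominated by the half-space Green function, the boundary terms are of lower order.

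Once the field estimate is in hand, the second step is the Pfaffelmoser iteration proper. Fix $t\le T$ and a characteristic with velocity $V(t)$ realizing (close to) $Q(t)$. For a short back-in-time interval $[t-\Delta,t]$ one estimates the velocity increment $|V(t)-V(t-\Delta)|$ by splitting the set of "source" points $(y,w)$ contributing to the field along the trajectory into a good set (where either the spatial separation $|X(s)-y|$ is not too small, or the relative velocity is large, so that characteristics separate quickly) and a bad set, and one bounds the bad-set contribution using the $L^{5/3}$ and $L^\infty$ bounds on $\rho$ and on $f$. Optimizing the splitting parameters over powers of $Q$ yields the key differential inequality of the form
\begin{equation*}
\frac{dQ}{dt}\le C\,Q(t)^{\beta}\bigl(1+\log Q(t)\bigr)^{\gamma}
\end{equation*}
with $\beta<1$ (Pfaffelmoser's exponent), from which $Q(t)\le\sigma(T)<\infty$ follows by Gronwall-type integration, with $\sigma(T)$ depending only on $T$, $Q(0)$ and $\|f_0\|_{C^{1,\mu}}$ through the constants in \eqref{S2E1a}--\eqref{S2E2b}.

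The only genuinely new point relative to \cite{HJHJJLV2} is the handling of characteristics that reflect at $\partial\Omega$ during the interval $[t-\Delta,t]$, and this is where the Velocity Lemma (Lemma \ref{velocity}) is essential: it guarantees that trajectories carrying high velocity stay a definite distance from the singular set $\Gamma$, so that between consecutive reflections the specular map $v\mapsto v^\ast$ is a smooth involution and the usual change-of-variables estimates in the Pfaffelmoser argument go through with reflected copies of the source points (exactly as the method of images already used in Lemmas \ref{timederivative}--\ref{tangentialderivative} suggests). The convexity of $\Omega$ ensures each characteristic meets $\partial\Omega$ only finitely many times on $[0,T]$, so these reflected contributions are a bounded number of terms of the same type and do not affect the exponents. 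The main obstacle, then, is purely bookkeeping: one must verify that the splitting into good and bad sets is stable under reflection and that the near-boundary Green-function corrections to $E$ remain subcritical; since both facts are already contained (for the pieces needed) in the lemmas proved above and in \cite{HJHJJLV1,HJHJJLV2}, we state the theorem and refer to those sources rather than reproducing the lengthy estimates, as indicated in the paper.
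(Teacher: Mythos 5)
Your proposal is correct in substance and follows the same route as the paper: Pfaffelmoser's back-in-time splitting argument adapted to a bounded convex domain with specular reflection, with the Velocity Lemma (Lemma~\ref{velocity}) guaranteeing that high-velocity trajectories stay a definite distance from the singular set; the paper itself gives no details and defers entirely to Theorem~3 of \cite{HJHJJLV2}.

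One slip worth correcting in your framing: you present the treatment of reflected characteristics as the ``only genuinely new point relative to \cite{HJHJJLV2},'' but \cite{HJHJJLV2} is already the bounded-domain paper (with Neumann data on $\phi$) in which reflections and the Pfaffelmoser iteration along bounced trajectories are worked out in full. What is actually new in the present paper is the Dirichlet boundary condition on $\phi$, and the paper's observation in Section~5 is precisely that the $Q$-bound argument is insensitive to that choice: the only ingredient involving $\phi$ is a field bound of the type $\Vert E(t,\cdot)\Vert_{L^{\infty}}\leq C\bigl(1+Q(t)^{2}\bigr)$, which holds for either boundary condition via the elementary Green-function estimate $|\nabla_{x}G(x,y)|\leq C|x-y|^{-2}$ together with (\ref{S2E1b}) and (\ref{rhoestimate}). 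Hence the proof of Theorem~3 of \cite{HJHJJLV2} transfers without change, which is exactly what the paper asserts; your sketch is otherwise consistent with this.
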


\bigskip

\begin{proof}[\textbf{Proof of Theorem \protect\ref{globalexistence}}]
The proof is similar to that in \cite{HJHJJLV2} and we omit it.
\end{proof}

\bigskip

\textbf{Acknowledgement} H.J. Hwang is supported by Basic Science Research
Program through the National Research Foundation of Korea funded by the
Ministry of Education, Science and Technology (2010-0008127) and also by
Priority Research Centers Program (2009-0094068). J.J.L. Vel\'{a}zquez is
supported by the grant DGES MTM2007-61755.

\bigskip

\end{document}